\theoremstyle{definition}
\newtheorem{thm}{Theorem}[section]
\newtheorem{defi}[thm]{Definition}
\newtheorem{prop}[thm]{Proposition}
\newtheorem{cor}[thm]{Corollary}
\newtheorem{lemma}[thm]{Lemma}
\newtheorem{rem}[thm]{Remark}
\newcommand\ZZ{\ensuremath{\mathbb{Z}}}
\newcommand\CC{\ensuremath{\mathbb{C}}}
\newcommand\RR{\ensuremath{\mathbb{R}}}
\newcommand\QQ{\ensuremath{\mathbb{Q}}}
\newcommand\vect[1]{\mbox{\boldmath$#1$}}
\newcommand\ord{\mathrm{ord}}
\newcommand\val{\mathrm{val}}
\newcommand\Val{\mathrm{Val}}
\newcommand\kakko[1]{\left\vert\left\vert {#1} \right\vert\right\vert}
\renewcommand\tilde{\widetilde}
\author{Shinsuke Iwao\thanks{Graduate School of Science, Rikkyo University, 3-34-1 Nishi-Ikebukuro, Toshima-ku, Tokyo, 171-8501 Japan. E-mail: iwao@rikkyo.ac.jp} , 
Hidetomo Nagai\thanks{Faculty of Science and Engineering, Waseda University, 3-4-1, Okubo, Shinjuku-ku, Tokyo 169-8555, Japan. E-mail: hdnagai@aoni.waseda.jp}, 
and 
Shin Isojima\thanks{Department of Physics and Mathematics, Aoyama Gakuin University, 5-10-1 Fuchinobe, Chuo-ku, Sagamihara-shi, Kanagawa, 252-5258 Japan. E-mail: isojima@gem.aoyama.ac.jp}}
\title{Tropical Krichever construction for the non-periodic box and ball system}
\date{\today}
\begin{document}

\maketitle

\begin{abstract}
A solution for an initial value problem of the box and ball system is constructed from a solution of the periodic box and ball system. The construction is done through a specific limiting process based on the theory of tropical geometry. This method gives a tropical analogue of the Krichever construction, which is an algebro-geometric method to construct exact solutions to integrable systems, for the non-periodic system. 
\end{abstract}

\section{Introduction}
The box and ball system (BBS) is a cellular automaton expressed by infinite array of `.' (empty box with capacity $1$) and `$1$' (ball) \cite{TS, T}. If we put $U_n^t :=$ \{the number of ball in the $n$th box at time $t$ \} $\in \{0,1\}$, the evolution equation of the BBS is given by
\begin{equation*}
U_n^{t+1}=\min \left[1-U_n^t,\sum_{k=-\infty}^{n-1}{U_k^t}-\sum_{k=-\infty}^{n-1}{U_k^{t+1}} \right],
\end{equation*}
where we suppose that the number of `1' is finite. It is proved that any state of the BBS consists of only solitons. The BBS provides a simple illustration for the soliton interaction:
\begin{verbatim}
t=0:   ..111...11...1........
t=1:   .....111..11..1.......
t=2:   ........11..11.11.....
t=3:   ..........11..1..111..
\end{verbatim}
Since the BBS has the infinite number of conserved quantities, we may regard it as an integrable cellular automaton. Some integrable extensions of the BBS are proposed, that is, the BBS with arbitrary capacity of each box and/or species of the ball \cite{T,TTM} and/or with a carrier \cite{MINS,TM}.

In \cite{TTMS}, Tokihiro et al.~clarified that the BBS is directly obtained from the discrete Korteweg--de Vries (dKdV) equation through the technique of ultradiscretization. The exact solution to the BBS, 
\begin{equation*}
U_n^t=T_n^t+T_{n+1}^{t+1}-T_{n+1}^t-T_{n}^{t+1},\quad T_n^t=\min_{\lambda}\{a_\lambda n+b_\lambda t+c_\lambda\}
\end{equation*}
was also given by ultradiscretizing the $N$-soliton solution of the dKdV equation. On the other hand, the BBS can be constructed by applying the crystallization technique to a quantum integrable system. Many results from this context are also reported (see, for example, \cite{HHIKTT}).

We know some approaches to solve the initial value problem of the BBS. The method in \cite{Tak04} is based on the application of representation theory. A combinatorial method employing the theory of soliton equations is developed in \cite{Mada2}. Recently, an analogue of the dressing method is also reported \cite{WNSTG}.

Another extension of the BBS imposing the periodic boundary condition \cite{YT}
\begin{equation*}
U_{n+L}^t = U_n^t 
\end{equation*}
is actively studied. It is called the periodic box and ball system (pBBS). We refer to the period $L$ as \textit{system-size}. Its exact solution is given in terms of the tropical theta function (see appendix) as
\begin{equation*}
U_n^t=\Theta_n^t+\Theta_{n+1}^{t+1}-\Theta_{n+1}^t-\Theta_{n}^{t+1}.
\end{equation*}
A solution of the initial value problem of the pBBS is also studied from various contexts. As is the case in the BBS, the method using representation theory \cite{KTT} and the combinatorial method \cite{Mada} are known. An interesting method extending the solution for open boundary system to that for the periodic boundary system is reported in \cite{Mada3}. In addition, an analytical method employing the theory of tropical geometry shows remarkable development \cite{Inoue2,Iwao,Iwao2,Iwao3}. Since each method is based on its own theory, we may consider that the BBS and pBBS give a junction of various mathematical theories.

In this article, we propose a method to construct the solution of the BBS from that of the pBBS through a specific limiting process. We start with the results for the pBBS based on the theory of tropical curves (\S \ref{sec2}). In \S \ref{sec3}, we study system-size dependence of its solution. Then evaluating the limit as the system-size tends to infinity, we show that the solution of the pBBS reduces to that of the BBS. This process exactly gives a tropical analogue of the Krichever construction, which is an algebro-geometric method to construct exact solutions to integrable systems \cite{K1,K2,K3}. Finally, we give concluding remarks in \S 4.
\section{Review of the periodic box and ball system}\label{sec2}
In this section, we review the theory of the pBBS (See, for example, \cite{Inoue2,Iwao,Iwao3} for details).
\subsection{Spectral curve}\label{sec2.1}
Let $K=\bigcup_{d\in\ZZ_{>0}}{\CC((q^{1/d}))}$ be the Puiseux series field of indeterminate $q$. Let 
\begin{equation*}
\val:K\to \QQ\cup \{+\infty\}
\end{equation*}
be the valuation of $K$, where $\val(0)=+\infty$. The sub-ring $R=\{x\in K\,\vert\,\val(x)\geq 0\}$ is called the valuation ring. Then, $\val(x)=\max\{r\in\QQ\,\vert\,q^{-r}x\in R\}$ holds. 

We define $2\times 2$ matrices $T(y)$ and $H(y)$ by
\begin{equation*}
T(y)=\left(\begin{array}{@{\,}c@{\ }c@{\,}}
	q & 1 \\
	y & 1
\end{array}\right),\quad
H(y)=\left(\begin{array}{@{\,}c@{\ }c@{\,}}
	1 & 1 \\
	y & q
\end{array}\right)\quad
\in \mathrm{Mat}(2,R[y]).
\end{equation*}
For a given initial state of the pBBS, we define $\mathcal{X}(y) \in \mathrm{Mat}(2;R[y])$ as follows:
\begin{equation}\label{eq4}
.11...1...\quad \Rightarrow\quad \mathcal{X}(y):=HTTHHHTHHH.
\end{equation}
In other words, we replace `$.$' and `$1$' with $H(y)$ and $T(y)$, respectively, and define $\mathcal{X}(y)$ as their product.

We regard the characteristic polynomial of $\mathcal{X}(y)$, say $\Phi(x,y):=\det(\mathcal{X}(y)-xE)$ as a polynomial in $x$ and $y$ over $K$. The algebraic curve defined by $\Phi(x,y) = 0$ is called the spectral curve.
\subsection{Tropical theta function solutions}
In this subsection, we introduce tropical theta function solutions to the pBBS \cite{Iwao3}. See appendix \ref{appA} for fundamental results of the tropical geometry.
\subsubsection{Spectral curve and theta function solutions}
Let $\Phi=\Phi(x,y)$ be the characteristic polynomial of $\mathcal{X}$, and $\Gamma$ be the tropical curve defined by $\Phi$. Denote by $\iota:\Gamma\to\Gamma^0$ the surjection defined in \ref{a1}. Note that $\Gamma^0$ is a piecewise-linear subset of $\RR^2$. The tropical curve $\Gamma$ is given by means of the following propositions:
\begin{prop}[\cite{Inoue2}]\label{prop2.2}
For a given initial state of pBBS with period $L$, let $S_1\leq S_2\leq \cdots \leq S_g$ be the lengths of solitons. Then, $\Gamma^0$ is illustrated as Figure \ref{fig1}, where $A_i:=\sum_{k=1}^g{\min[S_i,S_k]}$.
\end{prop}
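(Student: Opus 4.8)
The plan is to reconstruct the tropical curve $\Gamma^0$ directly from the combinatorial data of the pBBS initial state, connecting the soliton lengths $S_1 \leq \cdots \leq S_g$ to the Newton polygon of the characteristic polynomial $\Phi(x,y)$. First I would compute $\Phi(x,y) = \det(\mathcal{X}(y) - xE)$ explicitly as a polynomial over $K$. Since $\mathcal{X}(y)$ is a product of the matrices $T(y)$ and $H(y)$, each of entries in $R[y]$, the trace and determinant of $\mathcal{X}(y)$ are computable from the word in $T$'s and $H$'s. The key observation is that $\det T(y) = q - y$ and $\det H(y) = q - y$, so $\det \mathcal{X}(y) = (q-y)^L$ where $L$ is the system-size; meanwhile $\operatorname{tr}\mathcal{X}(y)$ encodes the soliton structure. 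Thus $\Phi(x,y) = x^2 - (\operatorname{tr}\mathcal{X}(y))\,x + (q-y)^L$.

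Next I would pass to the tropicalization. The tropical curve $\Gamma$ is determined by the valuations of the coefficients of $\Phi$ as functions of the monomials $x^i y^j$; concretely, $\Gamma^0 \subset \RR^2$ is the corner locus (non-smooth locus) of the piecewise-linear function obtained by taking $\min$ over monomials of $(\text{val of coefficient}) + i\,X + j\,Y$. So the real work is to determine $\val$ of each coefficient of $\operatorname{tr}\mathcal{X}(y)$ as a polynomial in $y$. This is where the soliton lengths enter: I expect that the Newton polygon of $\operatorname{tr}\mathcal{X}(y)$ has its vertices/slopes governed precisely by the partial sums $A_i = \sum_{k=1}^g \min[S_i,S_k]$. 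The combinatorial identity to establish is that a run of $S_i$ consecutive balls (a soliton of length $S_i$) contributes, after tropicalization, a segment whose horizontal extent and slope are controlled by $\min[S_i, S_k]$ against the other solitons.

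The main obstacle will be the middle step: extracting the exact valuation data of the coefficients of $\operatorname{tr}\mathcal{X}(y)$ from the matrix word, and showing it assembles into the $A_i$. Matrix products do not interact simply with valuation in general, because cancellation among the Puiseux series can raise valuations unpredictably. The crux is therefore a \emph{no-cancellation} or \emph{generic-leading-term} argument: I would argue that for a BBS initial state the leading (lowest-valuation) contributions to each coefficient come from a distinguished combinatorial path through the product of $T$'s and $H$'s, so that the minimum defining the tropicalization is attained uniquely and the Newton polygon is read off cleanly. Establishing this genericity — likely by an inductive analysis on the number of solitons, peeling off the longest or shortest soliton and tracking how $A_i$ updates — is the technical heart. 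Since the statement is attributed to \cite{Inoue2}, I would lean on the structure theory there: interpreting $\mathcal{X}(y)$ through the crystal/transfer-matrix formalism makes the correspondence between ball-configurations and the piecewise-linear shape of $\Gamma^0$ transparent, and the figure then follows by matching slopes and breakpoints against the $A_i$.

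Finally I would verify the claim on small cases (e.g. a single soliton $g=1$, where $A_1 = S_1$ and $\Gamma^0$ should be an elementary shape) and confirm the genus count: the number of bounded regions of $\Gamma^0$ should equal $g$, consistent with having $g$ solitons, which serves as a consistency check that the figure and the $A_i$ are correctly identified.
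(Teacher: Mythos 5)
The paper gives no proof of this proposition (it is quoted from \cite{Inoue2}), so your proposal stands on its own. Your reduction is correct and is the natural route: since $\det T=\det H=q-y$, one has $\Phi(x,y)=x^2-\operatorname{tr}\mathcal{X}(y)\,x+(q-y)^L$, the Newton data of the constant term is explicit, and everything hinges on the valuations of the coefficients of $\operatorname{tr}\mathcal{X}(y)$ as a polynomial in $y$. One remark on the obstacle you flag: the ``no-cancellation'' issue has a cleaner resolution than the genericity/induction argument you sketch. Every entry of $T$ and $H$ is a monomial with coefficient $1$, so all entries of $\mathcal{X}(y)$ lie in $K^+[y]$, the semiring of polynomials whose coefficients are Puiseux series with positive real leading coefficient; this semiring is closed under sums and products, and on it valuation behaves exactly tropically ($\val$ of a sum is the min, of a product is the sum, coefficientwise in $y$). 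This is precisely the $K^+[y]$ structure the paper itself sets up in \S 3.1, and it makes the valuation of each coefficient of $\operatorname{tr}\mathcal{X}(y)$ equal to the min-plus product of the tropicalized matrices with no further argument.

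The genuine gap is the step you explicitly defer: showing that this min-plus data produces breakpoints at $Y=1,S_1,\dots,S_g$ with the heights $A_1,\dots,A_g$ and $L-A_1,\dots,L-A_g$, $L-g$ of Figure \ref{fig1}. Your proposal states this as an expectation (``I expect that the Newton polygon \dots is governed precisely by the $A_i$'') and then points back to \cite{Inoue2} for the structure theory, which is circular as a proof of a result attributed to that very reference. This step is not a routine verification: the identity $A_i=\sum_k\min[S_i,S_k]$ encodes the conserved quantities of the pBBS, and relating the runs of $T$'s in the word $\mathcal{X}(y)$ to the slopes of the min-plus trace requires a real combinatorial argument (e.g.\ identifying which monomials in the expansion of the matrix word achieve the minimal valuation for each power of $y$, soliton by soliton). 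Without carrying that out, the proposal is a correct reduction plus a citation rather than a proof; the consistency checks you propose at the end (one soliton, genus count) are sensible sanity tests but do not close this gap.
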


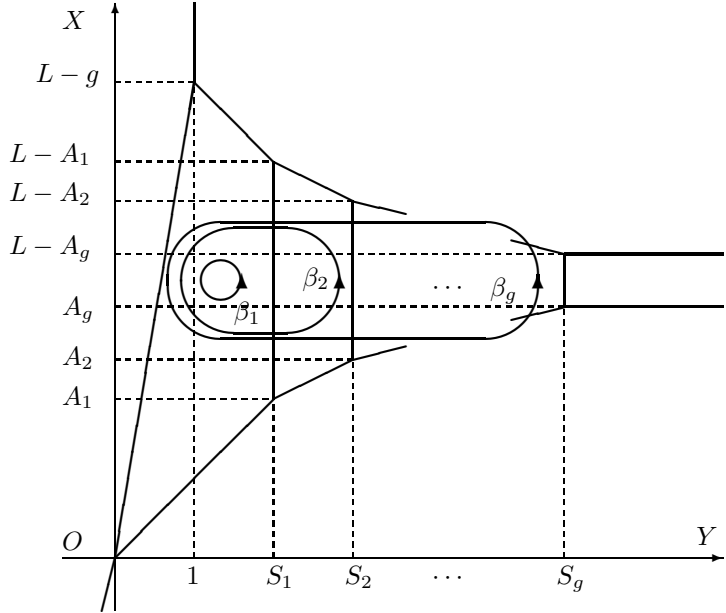
\begin{figure}[htbp]
\begin{center}
\begin{picture}(250,240)
\thinlines
\put(0,20){\vector(1,0){250}}
\put(20,0){\vector(0,1){230}}
\thicklines
\put(20,20){\line(1,1){60}}
\put(80,80){\line(2,1){30}}
\put(110,95){\line(4,1){20}}
\put(190,135){\line(1,0){60}}
\put(190,135){\line(-4,1){20}}
\put(20,20){\line(1,6){30}}
\put(50,200){\line(1,-1){30}}
\put(80,170){\line(2,-1){30}}
\put(110,155){\line(4,-1){20}}
\put(190,115){\line(1,0){60}}
\put(190,115){\line(-4,-1){20}}
\put(80,80){\line(0,1){90}}
\put(110,95){\line(0,1){60}}
\put(140,120){$\cdots$}
\put(190,115){\line(0,1){20}}
\put(50,200){\line(0,1){30}}
\put(20,20){\line(-1,-4){5}}
\thinlines
\multiput(50,20)(0,4){45}{\line(0,1){2}}
\multiput(80,20)(0,4){15}{\line(0,1){2}}
\multiput(110,20)(0,4){20}{\line(0,1){2}}
\multiput(190,20)(0,4){25}{\line(0,1){2}}
\multiput(20,200)(4,0){8}{\line(1,0){2}}
\multiput(20,170)(4,0){15}{\line(1,0){2}}
\multiput(20,155)(4,0){23}{\line(1,0){2}}
\multiput(20,135)(4,0){43}{\line(1,0){2}}
\put(-10,200){$L-g$}
\put(-20,170){$L-A_1$}
\put(-20,155){$L-A_2$}
\put(-20,135){$L-A_g$}
\multiput(20,80)(4,0){15}{\line(1,0){2}}
\multiput(20,95)(4,0){23}{\line(1,0){2}}
\multiput(20,115)(4,0){43}{\line(1,0){2}}
\put(0,78){$A_1$}
\put(0,93){$A_2$}
\put(0,112){$A_g$}
\put(47,10){$1$}
\put(77,10){$S_1$}
\put(107,10){$S_2$}
\put(140,10){$\cdots$}
\put(187,10){$S_g$}
\put(240,25){$Y$}
\put(0,220){$X$}
\put(0,23){$O$}
\thicklines
\put(60,125){\circle{15}}
\put(68,128){\vector(0,1){0}}
\put(75,125){\oval(60,40)}
\put(105,128){\vector(0,1){0}}
\put(110,125){\oval(140,44)}
\put(180,128){\vector(0,1){0}}
\put(65,110){$\beta_1$}
\put(91,123){$\beta_2$}
\put(162,119){$\beta_g$}
\end{picture}
\end{center}
\caption{Example of $\Gamma^0$.}
\label{fig1}
\end{figure}
Define the segments $\gamma^+, \gamma^-:[0,S_g]\to\Gamma^0$ by
\begin{equation*}
\gamma^+(t)=(\sum_i\min[S_i,t],t),\quad
\gamma^-(t)=(\min{[(L-g)t,L-\sum_i\min{[S_i,t]}]},t)
\end{equation*}
and $\theta_i:(0,L-2A_i)\to\Gamma^0$ ($i=1,2,\dots,g$) by $ \theta_i(t)=(t+A_i,S_i) $. Moreover, we define the half-lines $\sigma_1,\dots,\sigma_4:(0,+\infty)\to\Gamma^0$ as
\begin{equation*}
\sigma_1(t)=(-Lt,-2t),\ \sigma_2(t)=(t+L-g,1),\ \sigma_3(t)=(A_g,t+S_g),\ 
\sigma_4(t)=(L-A_g,t+S_g).
\end{equation*}
Then, it follows that
$\Gamma^0=(\gamma^+\cup\gamma^-)\cup(\bigcup_{i=1}^g{\theta_i})\cup (\bigcup_{i=1}^4{\sigma_i})$. Note that $\theta_i$ corresponds with $\theta_{i+1}$ if there exists $i$ such that $S_i=S_{i+1}$.

\begin{prop}
Let $\Gamma$ be the tropical curve and $\iota:\Gamma\to\Gamma^0$ be the above surjection. Then, $\Gamma$ is the unique connected finite graph such that (i) $\Gamma=(\gamma^+\cup\gamma^-)\cup(\coprod_{i=1}^g{\theta_i})\cup (\bigcup_{i=1}^4{\sigma_i})$, (ii) $\iota(\gamma^{\pm})=\gamma^\pm$, $\iota(\theta_i)=\theta_i$, $\iota(\sigma_i)=\sigma_i$. 
In other words, we obtain $\Gamma$ from $\Gamma^0$ by distinguishing $\theta_i$ and $\theta_j$ for $i\neq j$.
\end{prop}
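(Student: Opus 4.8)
The plan is to read off the combinatorial type of $\Gamma$ from its planar image $\Gamma^0$ (Proposition \ref{prop2.2}) by analysing the fibres of the surjection $\iota$, and then to check that conditions (i) and (ii) leave no freedom in the gluing. Throughout I would use that $\iota:\Gamma\to\Gamma^0$, as constructed in \ref{a1}, is piecewise-linear and injective on the interior of each edge of $\Gamma$, so that $\iota$ can fail to be globally injective only when two \emph{distinct} edges of $\Gamma$ are carried onto one and the same segment of $\Gamma^0$. The second ingredient, which I would take from the spectral-curve construction of \cite{Inoue2,Iwao,Iwao3}, is that the first Betti number of $\Gamma$ equals the number $g$ of solitons.

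First I would prove existence, i.e.\ that the tropical curve $\Gamma$ itself satisfies (i) and (ii). Pulling back the decomposition $\Gamma^0=(\gamma^+\cup\gamma^-)\cup(\bigcup_i\theta_i)\cup(\bigcup_j\sigma_j)$ of Proposition \ref{prop2.2}, I would check that $\iota$ restricts to a bijection over $\gamma^+$, over $\gamma^-$, and over each $\sigma_j$: these pieces are traversed once and carry pairwise distinct slopes, so by injectivity on edge interiors their preimages are single segments (resp.\ half-lines) mapped isometrically, giving $\iota(\gamma^\pm)=\gamma^\pm$ and $\iota(\sigma_j)=\sigma_j$. The delicate family is $\{\theta_i\}$: since each $\theta_i$ runs from $\gamma^+(S_i)=(A_i,S_i)$ to $\gamma^-(S_i)$, it is injective over its image when the $S_i$ are distinct; but if $S_i=S_{i+1}$ then $A_i=A_{i+1}$, so $\theta_i$ and $\theta_{i+1}$ coincide as subsets of $\Gamma^0$ and the image segment is covered more than once. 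To settle how many edges of $\Gamma$ lie over such a repeated segment I would use the genus count: removing all the $\theta$-edges leaves the set $\gamma^+\cup\gamma^-\cup\bigcup_j\sigma_j$, which is a tree, so the first Betti number of $\Gamma$ equals the total number of $\theta$-edges. Since $b_1(\Gamma)=g$, there is exactly one edge over each index $i$, and edges lying over a repeated segment are genuinely parallel rather than identified. This is precisely the assertion that the $\theta_i$ enter as a disjoint union $\coprod_i\theta_i$, and it establishes (i) and (ii).

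For uniqueness, let $\Gamma'$ be any connected finite graph satisfying (i) and (ii). By (i) its edges are exactly the prescribed metric segments $\gamma^\pm$, the $g$ disjoint segments $\theta_i$, and the four half-lines $\sigma_j$; by (ii) the map $\iota$ sends each to the correspondingly named subset of $\Gamma^0$, hence fixes the images of all their endpoints. Because $\iota$ is injective on $\gamma^+\cup\gamma^-\cup\bigcup_j\sigma_j$ (away from the common vertices), the two endpoints of each $\theta_i$ can only be glued to the uniquely determined points $\gamma^+(S_i)$ and $\gamma^-(S_i)$, and each $\sigma_j$ can attach only at its prescribed image point; connectedness forbids any further identification. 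Thus the incidence relations of $\Gamma'$ are forced, so $\Gamma'$ is isomorphic to $\Gamma$.

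I expect the main obstacle to be exactly the coincident case $S_i=S_{i+1}$. There the image $\Gamma^0$ genuinely loses information: the ``collapsed'' graph in which the two loops are fused is also connected and also has image $\Gamma^0$, yet has first Betti number $g-1$. Only the input $b_1(\Gamma)=g$ distinguishes the correct curve, so the real care lies in securing that input from the spectral-curve data and in verifying cleanly that no overlaps of $\iota$ occur away from the $\theta_i$.
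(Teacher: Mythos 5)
The paper states this proposition without proof---it belongs to the reviewed material from \cite{Inoue2,Iwao,Iwao3}---so there is no in-paper argument to compare against; judged on its own terms, your proposal has a genuine gap. By Definition \ref{a1}, the graph $\Gamma$ is characterized by the \emph{local} condition $\sharp\{\iota^{-1}(P)\}=\vartheta(P)$, where $\vartheta(P)$ is the number of irreducible components of the truncated polynomial $\Phi_P\in K[x^{\pm 1},y^{\pm 1}]$. Hence the number of edges of $\Gamma$ lying over the interior of any segment of $\Gamma^0$ is, by definition, the value of $\vartheta$ there, and the first Betti number of $\Gamma$ is an \emph{output} of that computation, not an available input. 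Your argument inverts this: you import $b_1(\Gamma)=g$ from the spectral-curve construction and use it to count the $\theta$-edges, but that equality is essentially equivalent to the statement being proved (as you yourself observe, the collapsed graph with $b_1=g-1$ satisfies every other constraint you impose, so everything rides on an input you do not establish). Likewise, your claim that $\iota$ restricts to a bijection over $\gamma^\pm$ and the $\sigma_j$ because those pieces ``carry pairwise distinct slopes'' does not engage the definition; what is needed there is precisely $\vartheta(P)=1$ on their interiors.

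The missing step, which is the entire mathematical content of the proposition, is the computation of $\vartheta$. Writing $\Phi(x,y)=x^2-\mathrm{tr}\,\mathcal{X}(y)\cdot x+(q-y)^L$, one checks that for $P$ in the interior of $\gamma^\pm$ or $\sigma_j$ the minimizing set $\Lambda(P)$ is dual to a lattice edge of length one in the Newton subdivision, so $\Phi_P$ is irreducible up to units and $\vartheta(P)=1$; whereas for $P$ in the interior of the segment at height $S_i$ the minimizing monomials are terms $c_jy^jx$ of the trace, and $\Phi_P$ factors in $K[x^{\pm 1},y^{\pm 1}]$ into exactly $k$ components, where $k$ is the number of solitons of length $S_i$ (the lattice length of the dual edge). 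That is what forces $k$ parallel edges of $\Gamma$ over the coincident segment and yields the disjoint union $\coprod_{i=1}^g\theta_i$. Your uniqueness paragraph is correct but is the easy half: once (i) posits the disjoint union and (ii) pins the images, the endpoint identifications are forced; the substance lies in showing that the curve of Definition \ref{a1} actually realizes that disjoint union, and that requires the local factorization argument, not a genus count.
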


Let $\beta_i \in H_1(\Gamma,\ZZ)$ $(i=1,\dots,g)$ be $g$ cycles over $\Gamma$ (see Figure \ref{fig1}) expressed as
\begin{equation*}
\beta_i(t)=\left\{
\begin{array}{lc}
\gamma^+(t) & 0\leq t\leq S_i \\
\theta_i(t-S_i) & S_i<t<L-2A_i+S_i\\
\gamma^-(-t+L-2A_i+2S_i) & L-2A_i+S_i\leq t\leq L-2A_i+2S_i.
\end{array}
\right.
\end{equation*}
Note that $\beta_1,\dots,\beta_g$ give a basis of $H_1(\Gamma,\ZZ)$. For the basis $\{\beta_i\}_i$, the period matrix $B$ and the Jacobi variety $\mathrm{Jac}\ \Gamma:=\RR^g/B\ZZ^g$ are determined (see definition \ref{defA5}). 
\begin{cor}\label{crl2.3}
We have $B=((L-2A_i)\delta_{i,j}+2\min[S_i,S_j])_{i,j}$.
\end{cor}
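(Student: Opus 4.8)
The plan is to compute $B$ directly from its definition (Definition \ref{defA5}) as the Gram matrix of the basis $\{\beta_1,\dots,\beta_g\}$ for the length pairing on $H_1(\Gamma,\ZZ)$: writing each cycle as an integer combination $\sum_e \ep_i(e)\,e$ of oriented edges $e$ of $\Gamma$ with coefficients $\ep_i(e)\in\{0,\pm1\}$, one has $B_{ij}=\sum_e \ep_i(e)\ep_j(e)\,\ell(e)$, where $\ell(e)$ is the lattice length of $e$. The first fact I would record is that each of $\gamma^{\pm},\theta_i,\sigma_k$ is parametrized \emph{by lattice length}: on every linear piece the tangent direction is a primitive integer vector (for instance $(1,0)$ for $\theta_i$, and $(g-k,1)$ on the piece of $\gamma^+$ with $S_k<t<S_{k+1}$), so the parameter $t$ coincides with the lattice-length coordinate. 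Consequently the lengths of the three arcs making up each $\beta_i$ can be read off immediately from its piecewise definition, and the whole problem reduces to a bookkeeping of overlaps and orientations.

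For the diagonal I would simply add the lengths of the three arcs of $\beta_i$: the arc $\gamma^+$ over $[0,S_i]$ has length $S_i$, the arc $\theta_i$ over $(0,L-2A_i)$ has length $L-2A_i$, and the backward arc $\gamma^-$ over $[0,S_i]$ again has length $S_i$. This yields $B_{ii}=S_i+(L-2A_i)+S_i=(L-2A_i)+2S_i$, which matches the claimed formula since $\min[S_i,S_i]=S_i$.

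For $i\neq j$ I would first isolate the shared edges. By the preceding proposition the segments $\theta_i$ and $\theta_j$ are distinct edges of $\Gamma$ (they are kept separate in the disjoint union $\coprod_i\theta_i$ even when $S_i=S_j$), and $\gamma^+,\gamma^-$ are distinct paths; hence the only possible overlaps are the shared initial arcs on $\gamma^+$ and on $\gamma^-$. Taking $S_i\leq S_j$ without loss of generality, both cycles run along $\gamma^+$ over $[0,S_i]$ and along $\gamma^-$ (backwards) over $[0,S_i]$. On $\gamma^+$ both traverse forward, so $\ep_i(e)\ep_j(e)=+1$; on $\gamma^-$ both traverse backward, so $\ep_i(e)\ep_j(e)=(-1)(-1)=+1$. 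Each shared arc has length $S_i=\min[S_i,S_j]$, giving $B_{ij}=2\min[S_i,S_j]$ and completing the identification of $B$.

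The step requiring the most care is this orientation bookkeeping: I must verify that the two overlapping arcs enter with the same sign so that the contributions \emph{add} to $2\min[S_i,S_j]$ rather than cancel, and that the distinguished copies $\theta_i\neq\theta_j$ genuinely share no edge. Once the parametrization-by-lattice-length observation is established, checking that the direction vectors along $\gamma^{\pm}$ are primitive and that the arc endpoints are exactly those read off from $\gamma^{\pm}(t)$ is routine, so I expect no essential difficulty beyond the sign and overlap analysis.
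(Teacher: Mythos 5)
Your computation is correct and is exactly the direct verification the paper intends (the corollary is stated without proof as an immediate consequence of Proposition \ref{prop2.2}, the explicit parametrization of the cycles $\beta_i$, and the definition of the pairing $(\beta_i,\beta_j)$ as a signed overlap length). Your two key observations --- that the parameter $t$ on $\gamma^{\pm}$ and $\theta_i$ is the lattice-length coordinate because each linear piece has primitive integer direction, and that the overlaps on $\gamma^+$ and on $\gamma^-$ both enter with sign $+1$ while the distinguished edges $\theta_i$, $\theta_j$ share no length --- are precisely the points that need checking, and they are handled correctly.
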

For a sufficiently large real number $U>0$, we put 
\begin{equation*}
O:(X,Y)=(0,0),\quad Q:(X,Y)=(U,1),\quad R:(X,Y)=(A_g,U).
\end{equation*}
Furthermore, we define real vectors $\mu,\omega\in \mathrm{Jac}\,\Gamma$ by
\begin{equation}
\mu:
=-\vect{A}(Q),\quad 
\omega:=
-\vect{A}(R),
\end{equation}
where $\vect{A}:\mathrm{Pic}(C)\to \mathrm{Jac}\,\Gamma$ is the tropical Abel-Jacobi mapping with initial point $O$. We note that $\mu$ and $\omega$ are independent of $U$. From proposition \ref{prop2.2}, we have
\begin{equation}\label{eq5}
\mu\equiv {}^t(1,1,\dots,1),\quad \omega\equiv
-{}^t(S_1,S_2,\dots,S_g),\quad(\mathrm{mod\,}B\ZZ^g).
\end{equation}

\begin{rem}
Using the expression $B=(\vect{b}_1,\dots,\vect{b}_g)$, we have $L \mu={}^t(L,L,\dots,L) = \vect{b}_1+\vect{b}_2+\cdots+\vect{b}_g \equiv 0\ (\mathrm{mod\,}B\ZZ^g)$, which reflects the periodic boundary condition $U_{n+L}^t=U_n^t$. See theorem \ref{thm2.4}.
\end{rem}

The general solution to the pBBS is represented by the tropical theta function associated with the tropical curve. Let $\Theta(z;B)$ be the tropical theta function defined in appendix \ref{appA}. 
\begin{thm}[\cite{Iwao2}]\label{thm2.4}
The general solution to the pBBS is expressed as
\begin{equation*}
U_n^t=\Theta_n^t+\Theta_{n+1}^{t+1}-\Theta_n^{t+1}-\Theta_{n+1}^t,\qquad \Theta_n^t=\Theta(\mu n+\omega t+c_0;B),
\end{equation*}
where $c_0\in \mathrm{Jac}\,\Gamma$ is determined from an initial state through the eigenvector mapping (See the following).
\end{thm}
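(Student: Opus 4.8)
The plan is to obtain the theorem by tropicalizing the classical finite-gap (Krichever) solution of the integrable lattice that sits over the Puiseux field $K$ and whose ultradiscretization is the pBBS, namely the periodic discrete Toda lattice (equivalently the discrete KdV underlying the BBS). At the level of $K$ the monodromy matrix $\mathcal{X}(y)$ is the Lax matrix: its characteristic polynomial $\Phi(x,y)$ supplies a complete family of conserved quantities, and one time step of the lattice lifts to an isospectral refactorization $\mathcal{X}(y)\mapsto g(y)^{-1}\mathcal{X}(y)g(y)$ that fixes $\Phi$, hence fixes the spectral curve $C:\Phi(x,y)=0$ and its tropicalization $\Gamma$. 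Granting this, the theorem reduces to two steps: (i) linearizing the dynamics as a translation flow on $\mathrm{Jac}\,\Gamma$, and (ii) identifying the valuation of the classical tau-function formula with the stated tropical theta formula.

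For step (i) the device is the eigenvector (Baker--Akhiezer) map. Over each point $(x,y)$ of $C$ the matrix $\mathcal{X}(y)$ has eigenvalue $x$; normalizing the associated eigenvector and recording the divisor of its poles gives a degree-$g$ divisor on $C$, whose tropicalization is a tropical divisor class on $\Gamma$. Applying the tropical Abel--Jacobi map $\vect{A}$ with base point $O$ yields the point $c_0\in\mathrm{Jac}\,\Gamma$. I would then track how this divisor moves under the two generators of the dynamics: the spatial shift $n\mapsto n+1$, which cyclically moves one factor of $\mathcal{X}(y)$, and the single time step $t\mapsto t+1$, the isospectral refactorization above. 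By the standard finite-gap argument each displacement of $\vect{A}$ of the pole divisor equals the Abel--Jacobi image of the corresponding puncture of $C$; these punctures are exactly the distinguished points $Q$ and $R$, so the two shifts are $-\vect{A}(Q)=\mu$ and $-\vect{A}(R)=\omega$. Hence the eigenvector image is $\mu n+\omega t+c_0$, consistent with the congruences $\mu\equiv{}^t(1,\dots,1)$ and $\omega\equiv-{}^t(S_1,\dots,S_g)$ of \eqref{eq5} and with Corollary \ref{crl2.3}.

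For step (ii) I would start from the Riemann theta representation of the tau function over $K$, $\tau_n^t=\theta(\vect{v}n+\vect{w}t+\vect{c};\tilde B)$, in which the lattice field is recovered by a Hirota-type bilinear combination of $\tau$ at the four corners $(n,t)$, $(n+1,t)$, $(n,t+1)$, $(n+1,t+1)$, schematically $u_n^t=\tau_n^t\tau_{n+1}^{t+1}/(\tau_{n+1}^t\tau_n^{t+1})$. Applying $\val$ turns the product of theta values into a sum and, term by term, the leading order of each theta series into its tropical counterpart: the valuation of $\tilde B$ is $B$, the valuation of $\theta(\,\cdot\,;\tilde B)$ is the tropical theta function $\Theta(\,\cdot\,;B)$, and the valuation of $\vect{v}n+\vect{w}t+\vect{c}$ is $\mu n+\omega t+c_0$. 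The multiplicative bilinear identity therefore tropicalizes to the additive min-plus identity $U_n^t=\Theta_n^t+\Theta_{n+1}^{t+1}-\Theta_n^{t+1}-\Theta_{n+1}^t$ with $\Theta_n^t=\Theta(\mu n+\omega t+c_0;B)$, which is precisely the claim.

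The crux, and where I expect the real difficulty, is justifying $\val\,\theta=\Theta$ and the exchange of $\val$ with the bilinear combination in step (ii). One only has $\val(a+b)\ge\min(\val a,\val b)$, with equality exactly when the minimizing term is unique, so I must show that the infinite theta series has a uniquely minimizing lattice vector for generic argument and, far more delicately, that the four-term combination defining $U_n^t$ remains correct across the walls of $\mathrm{Jac}\,\Gamma$ where ties occur and naive cancellation could spoil the valuation. I would close this by exploiting the piecewise-linearity and convexity of $\Theta$ together with the rigid a priori constraint $U_n^t\in\{0,1\}$ imposed by the BBS evolution, which forces the tropical limit to coincide with the genuine pBBS state; checking this compatibility uniformly in $(n,t)$, including on the non-generic loci, is the main obstacle.
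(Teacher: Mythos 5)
First, note that the paper itself offers no proof of theorem \ref{thm2.4}: it is quoted from \cite{Iwao2}, so there is no in-text argument to compare yours against. Your outline does reconstruct the strategy of the cited works --- lift the pBBS to the periodic discrete Toda/KdV lattice over $K$, linearize the flow on the Jacobian via the eigenvector map (with the shifts in $n$ and $t$ given by $-\vect{A}(Q)=\mu$ and $-\vect{A}(R)=\omega$), and then take the valuation of the Riemann theta tau-function formula. That much is the right skeleton, and your identification of the crux is also correct: everything hinges on $\val\circ\theta=\Theta$ and on the four-term bilinear combination surviving the passage to valuations, where only $\val(a+b)\ge\min(\val(a),\val(b))$ holds in general.

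However, your proposed way of closing that gap does not work. Appealing to piecewise linearity of $\Theta$ plus the constraint $U_n^t\in\{0,1\}$ is circular: the fact that the tropical formula produces a $\{0,1\}$-valued field is part of what is being proved, and it cannot rule out a cancellation inside the theta series (or among the four theta values) that would shift the valuation away from the min-plus expression. The mechanism that actually closes this in the cited literature, and which the present paper quietly sets up in \S 3.1, is positivity: the matrices $H,T$ have entries in the semifield $K^+$, the spectral data and the theta series can be arranged so that every summand lies in $K^+$, and for $f,g\in K^+$ one has the exact identity $\val(f+g)=\min(\val(f),\val(g))$ --- no leading-term cancellation is possible. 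Without importing this total-positivity structure (or an equivalent uniqueness-of-minimizer argument valid for \emph{all} $(n,t)$, including the non-generic walls of $\mathrm{Jac}\,\Gamma$), step (ii) of your plan remains an unproved assertion, and that is precisely the hard content of the theorem.
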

\subsubsection{Eigenvector mapping}

Here we review the eigenvector mapping \cite{Mumford2}, which maps an initial data of a discrete integrable system to a positive divisor on some algebraic curve. We here consider only a special case necessary for our purpose.

Let $\mathcal{X}=\mathcal{X}(y)$ be the $2\times 2$ matrix defined in \S \ref{sec2.1} and $C$ be the algebraic curve defined from the algebraic equation $f(x,y)=\det(\mathcal{X}(y)-xE)=0$. Denote the genus of $C$ by $\tilde{g}$. It is known that the genus of the tropical curve $\Gamma$ associated with $f$ is equal or less than $\tilde{g}$.

\begin{lemma}[\cite{Mumford2}]
There exists an unique point $\infty\in C$ which is expressed as $(x,y)=(\infty,\infty)$.
\end{lemma}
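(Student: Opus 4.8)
The plan is to realize $C$ as a double cover of the $y$-line and reduce everything to a ramification statement at $y=\infty$. Since $\det T(y)=\det H(y)=q-y$, the product $\mathcal{X}(y)$ satisfies $\det\mathcal{X}(y)=(q-y)^L$ (with $L$ the system-size), so
\[
f(x,y)=x^2-t(y)\,x+d(y),\qquad t(y)=\mathrm{tr}\,\mathcal{X}(y),\quad d(y)=(q-y)^L.
\]
As $f$ is monic of degree $2$ in $x$, the projection $(x,y)\mapsto y$ makes $C$ a degree-$2$ cover $C\to\PP^1$ of the complete $y$-line, and finiteness already yields at least one point over $y=\infty$. Because $\deg_y d>\deg_y t$ (we bound $\deg_y t\le\lfloor L/2\rfloor<L$ below), neither root $x$ can stay bounded as $y\to\infty$, so every point over $y=\infty$ automatically has $x=\infty$. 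Completing the square (here $K$ has characteristic $0$) writes $C$ as $w^2=\Delta(y)$ with $\Delta:=t^2-4d$, and the fibre over $y=\infty$ is a single point exactly when $\deg_y\Delta$ is odd. Thus the lemma is equivalent to the parity statement $\deg_y\Delta\equiv1\pmod 2$.

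First I would bound $\deg_y t$. Writing $\mathcal{X}=M_1\cdots M_L$ with each $M_i\in\{T,H\}$ of the shape $\left(\begin{smallmatrix}a_i&1\\ y&c_i\end{smallmatrix}\right)$, every entry of the product is a sum over walks on $\{1,2\}$ that collects one factor $y$ at each $2\!\to\!1$ step; for the trace one sums over closed walks, in which the $2\!\to\!1$ and $1\!\to\!2$ steps are equinumerous, so $\deg_y t\le\lfloor L/2\rfloor$. When $L$ is odd this gives $\deg_y t^2\le L-1<L=\deg_y d$, hence $\deg_y\Delta=L$ is odd and this settles the odd case.

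The even case is where the main obstacle sits, since now the top terms of $t^2$ and $4d$ may cancel. The power $y^{L/2}$ in $t$ arises only from the two strictly alternating closed walks $1,2,\dots,1$ and $2,1,\dots,2$, each using only the entries $1$ and $y$, so the leading coefficient of $t$ is exactly $1+1=2$; then $t^2=4y^L+\cdots$ and $4d=4y^L+\cdots$ cancel at top order and $\deg_y\Delta\le L-1$. The crux is to show the $y^{L-1}$-coefficient of $\Delta$ does not vanish. I would isolate the subleading coefficient $c_1$ of $t$ by the gauge transformation $D^{-1}\mathcal{X}D$, $D=\mathrm{diag}(1,y^{1/2})$: with $\ep=y^{-1/2}$ one has $D^{-1}M_iD=\ep^{-1}(J+\ep A_i)$, $J=\left(\begin{smallmatrix}0&1\\1&0\end{smallmatrix}\right)$, $A_i=\mathrm{diag}(a_i,c_i)$, so that $t=\ep^{-L}\,\mathrm{tr}\prod_i(J+\ep A_i)$; the $\ep^1$-term has zero trace and the $\ep^2$-term gives $c_1$ as an explicit sum over pairs. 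Using $(a_i,c_i)=(q,1)$ for $T$ and $(1,q)$ for $H$, a short bookkeeping yields $c_1=N_1(q^2+1)+2N_2q$ with $N_1,N_2\ge0$ and $N_1+N_2=\binom{L}{2}$; hence $c_1+Lq$ has $q$-linear coefficient $2N_2+L>0$ and is a nonzero element of $K$. Since the $y^{L-1}$-coefficient of $\Delta$ equals $4(c_1+Lq)$, we obtain $\deg_y\Delta=L-1$, again odd. In both parities $y=\infty$ is therefore a branch point, $C$ has a single place above it, and that place has $x=\infty$; this is the unique point $\infty\in C$ with $(x,y)=(\infty,\infty)$.
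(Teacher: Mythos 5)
Your argument is correct, but it is genuinely different from what the paper does: the paper offers no proof at all for this lemma, simply citing van Moerbeke--Mumford, where the structure of the spectral curve of a periodic difference operator at infinity is established in general (for regular operators, via the Newton polygon / leading behaviour of the Bloch eigenfunctions). You instead give a self-contained, elementary verification tailored to the BBS transfer matrices: since $\det T=\det H=q-y$, the curve is $x^2-t(y)x+(q-y)^L=0$, and the fibre of the degree-$2$ cover over $y=\infty$ is a single (necessarily $x=\infty$) place iff $\deg_y(t^2-4(q-y)^L)$ is odd. Your walk-counting bound $\deg_y t\le\lfloor L/2\rfloor$ settles odd $L$ at once, and for even $L$ the identification of the leading coefficient of $t$ as $2$ (the two strictly alternating closed walks) together with the computation of the subleading coefficient $c_1=N_1(q^2+1)+2N_2q$, $N_1+N_2=\binom{L}{2}$, shows the $y^{L-1}$-coefficient of $\Delta$ equals $4(c_1+Lq)$, whose $q$-linear part $2N_2+L$ is positive, so $\deg_y\Delta=L-1$ is again odd. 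I checked these coefficient computations (e.g.\ $L=2$ gives $t=2y+a_1a_2+c_1c_2$, matching your formula) and the reduction to the hyperelliptic model $w^2=\Delta(y)$ over the algebraically closed field $K$; all steps hold. What the citation buys is generality and brevity; what your computation buys is an explicit verification that the general hypotheses are met for these particular $2\times2$ matrices, plus the extra information $\deg_y\Delta\in\{L,L-1\}$, which bounds the genus $\tilde g$ of $C$. One small presentational caveat: the uniqueness asserted in the lemma is a statement about places of the smooth projective model of $C$ (the affine curve defined by $f$ may be singular), and your argument does operate at that level via the function field $K(y)(\sqrt{\Delta})$, but it would be worth saying so explicitly.
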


Due to the definition of $C$, the equation
\begin{equation}\label{eq6}
\mathcal{X}(y) {}^t(1,h(x,y))=x {}^t(1,h(x,y))
\end{equation}
determines the rational function $h$ on $C$ uniquely.

\begin{thm}[\cite{Mumford2}]\label{thm2.7}
Let $\mathrm{Div}\,C:=\bigoplus_{p\in P}{\ZZ\cdot p}$ be the divisor group of $C$, which is a free abelian group generated by the points in $C$. Then, there exists a general positive divisor $D\in\mathrm{Div}\,C$ of degree $\tilde{g}$ such that the zero divisor $(h)_0$ of $h$ is expressed as
\begin{equation}\label{eq7}
(h)_0=D+\infty.
\end{equation}
\end{thm}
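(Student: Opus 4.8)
The plan is to prove the identity by computing the principal divisor of $h$ on $C$ and isolating the point $\infty$. Writing $\mathcal{X}(y)=\left(\begin{smallmatrix}a&b\\ c&d\end{smallmatrix}\right)$ with $a,b,c,d\in R[y]$, the two rows of \eqref{eq6} give $h=(x-a)/b=c/(x-d)$, and the defining equation $f(x,y)=0$ is exactly the compatibility identity $(x-a)(x-d)=bc$. Since $\det T(y)=\det H(y)=q-y$, we get $\det\mathcal{X}=(q-y)^{L}$ and hence $f(x,y)=x^{2}-(a+d)x+(q-y)^{L}$, so the projection $\pi\colon C\to\PP^{1}$, $(x,y)\mapsto y$, is a double cover; I would track the zeros and poles of $h$ along its two sheets.

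First I would pin down the finite part of the divisor. As $b$ and $c$ depend on $y$ only, $h$ is regular and nonzero over the points with $b,c\neq 0$, so its finite zeros and poles lie above the zeros of the off-diagonal entries; the identity $(x-a)(x-d)=bc$ shows that above a zero $y_{0}$ of $b$ the two sheets are $x=a(y_{0})$ and $x=d(y_{0})$, with $h$ regular and nonzero on the first and simply singular on the second, and symmetrically a simple zero lies above each zero of $c$. Counting these against the $y$-degrees of $b$ and $c$, which are determined by $L$ through the product structure of the $T,H$ factors, gives effective finite divisors of the predicted degrees. The last ingredient is the unique point $\infty$ over $y=\infty$: there $\pi$ is ramified and $x$ grows strictly faster in the local parameter than any of $a,b,c,d$, so a local expansion shows that $\infty$ occurs with multiplicity exactly one in the divisor of $h$. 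Isolating it leaves an effective divisor $D$ supported over the finite locus, and in the scaling of the eigenvector for which $h$ vanishes at $\infty$ this is the asserted decomposition $(h)_0=D+\infty$.

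It then remains to fix $\deg D$ and to prove its genericity. For the degree I would match the finite count with the value of $\tilde{g}$ read off from the Newton polygon of $f$, or equivalently from Riemann--Hurwitz applied to $\pi$, whose branch divisor is cut out by the discriminant $(a+d)^{2}-4(q-y)^{L}$; this gives $\deg D=\tilde{g}$ and $\deg(h)_0=\tilde{g}+1$. The step I expect to be the main obstacle is the word \emph{general}, that is, the nonspeciality $h^{0}(\mathcal{O}_{C}(D))=1$. The natural argument is that an independent second section of $\mathcal{O}_{C}(D)$ would furnish an eigenvector-valued function on $C$ independent of $(1,h)$, contradicting the one-dimensionality of the eigenspaces of $\mathcal{X}(y)$ at generic $y$; making this rigorous at the ramification points, and for initial states whose curve $C$ acquires singularities (so that $\tilde{g}$ drops below the Newton-polygon bound), is precisely where the genericity and Mumford's analysis enter.
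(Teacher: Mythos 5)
A preliminary remark: the paper contains no proof of this statement. Theorem \ref{thm2.7} is quoted from Moerbeke--Mumford \cite{Mumford2} and used as a black box, so there is no in-paper argument to measure yours against; what follows assesses your reconstruction on its own terms.

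Your setup is sound: $h=(x-a)/b=c/(x-d)$, the compatibility $(x-a)(x-d)=bc$, $\det\mathcal{X}=(q-y)^{L}$, and the identification of the finite zeros of $h$ with the points $(x,y)=(a(y^{\ast}),y^{\ast})$ where $c(y^{\ast})=0$ --- which is exactly what the paper exploits later in lemma \ref{root0}. The gaps are in the two steps you treat as routine. First, the point $\infty$. If $x$ really grew strictly faster than $a,b,c,d$ there, then $h=(x-a)/b\sim x/b$ would \emph{blow up}, making $\infty$ a pole of $h$ rather than the claimed zero; and the escape ``in the scaling of the eigenvector for which $h$ vanishes at $\infty$'' does not exist, since \eqref{eq6} with first component normalized to $1$ determines $h$ uniquely (the paper says so explicitly). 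Moreover the premise is false in general: along the end over $y=\infty$ one has $\val(x)=\tfrac{L}{2}\val(y)$, while $\deg_{y}(a+d)$ can be exactly $L/2$, so $x$ and the trace are of the same order there. Establishing $\ord_{\infty}(h)=1$ with the correct sign requires comparing $\ord_{\infty}(c)$ with $\ord_{\infty}(x-d)$ using the precise $y$-degrees of the entries of a length-$L$ product of $T$'s and $H$'s; none of this is carried out.

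Second, the degree and genericity. The off-diagonal entries of $\mathcal{X}$ have $y$-degree of order $L/2$ (one factor of $y$ for each member of a non-adjacent set of positions contributing its lower-triangular part), whereas $\tilde{g}$ is governed by the number of solitons; so the naive count ``one finite zero of $h$ per root of $c$'' overshoots $\tilde{g}$, and the plane spectral curve is genuinely singular for essentially every BBS initial state. You explicitly defer this situation to ``Mumford's analysis'', but for the matrices at hand it is not a degenerate exception --- it is the only case that occurs --- so that deferral, together with the unproved nonspeciality of $D$ that you also flag, leaves the substance of the theorem unestablished. The overall strategy is the right one; the two steps above are precisely where the content lies.
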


In general, the mapping $\mathcal{X}(y)\mapsto D$ is called the eigenvector mapping. Denote $D=p_1+p_2+\dots+p_{\tilde{g}}$ $(p_k\in C)$ and the $(x,y)$-coordinate of the point $p_k$ by $(x_k,y_k)$. Let $X_k:=\val(x_k),Y_k:=\val(y_k)$. We regard the point $P_k:(X,Y)=(X_k,Y_k)$ in the real plane $\RR^2$ as the tropicalization of the point $p_k$. It is immediately proved that $P_k\in\Gamma$. (See, for example, \cite{Iwao3}).

The element $c_0\in\mathrm{Jac}\,\Gamma$ in theorem \ref{thm2.4} is calculated from the configuration of $P_k$.
\begin{prop}[\cite{Iwao2}]\label{prop2.9}
Let $\kappa\in\mathrm{Jac}\,\Gamma$ be the Riemann constant of $\Gamma$ (\S \ref{sec:a.1.5}). Then, we have
\begin{equation}\label{c0}
c_0=
\kappa-\vect{A}(P_1+\cdots+P_{\tilde{g}}). 
\end{equation}
\end{prop}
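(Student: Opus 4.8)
The plan is to recognize \eqref{c0} as the tropical shadow of the classical Krichever–Mumford formula, in which the phase of a theta-function solution equals the Riemann constant minus the Abel–Jacobi image of the eigenvector divisor. One could argue directly in the tropical category by means of a tropical Riemann vanishing theorem for $\Theta(\,\cdot\,;B)$, but I would instead prove the analogous identity on the algebraic curve $C$ and then pass to the tropical limit by applying the valuation $\val$, exploiting that $\Gamma$, $B$, $\vect{A}$ and $\kappa$ are all defined as tropicalizations of the corresponding classical objects; this keeps the argument on the well-developed terrain of algebro-geometric integration.

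First, on $C$: the Baker–Akhiezer function $h$ is pinned down by \eqref{eq6}, and by Theorem \ref{thm2.7} its zero divisor is $(h)_0 = D + \infty$ with $D = p_1 + \cdots + p_{\tilde g}$. Under the discrete time evolution of the (periodic) discrete Toda / dKdV hierarchy from which the pBBS is ultradiscretized, the class of $D$ moves linearly on $\mathrm{Jac}\,C$, and the associated tau function admits a Krichever-type expression $\theta(\tilde\mu n + \tilde\omega t + \tilde c_0)$ for suitable classical data $\tilde\mu,\tilde\omega,\tilde c_0$. Riemann's vanishing theorem identifies the zero divisor of $p \mapsto \theta\bigl(\tilde A(p) - \tilde A(D) - \tilde\kappa\bigr)$ with $D$ itself; comparing this with the initial datum at $n=t=0$ forces $\tilde c_0 = \tilde\kappa - \tilde A(D)$, up to the usual sign conventions in the definition of the theta characteristic.

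Second, I would tropicalize by applying $\val$ to this classical identity. Three compatibilities are needed: (i) $\val\circ\tilde A = \vect{A}\circ\val$ on divisors, which follows from realizing the period matrix $B$ of Corollary \ref{crl2.3} as the valuation of the classical period matrix and from the definition $P_k=(\val(x_k),\val(y_k))$ of the tropicalization of $p_k$; (ii) $\val(\tilde\kappa)=\kappa$, i.e.\ the tropical Riemann constant is the valuation of the classical one; and (iii) $\val(\tilde c_0)=c_0$, which is precisely the statement that the valuation of the classical theta solution is the tropical theta solution of Theorem \ref{thm2.4}. Granting these, applying $\val$ to $\tilde c_0=\tilde\kappa-\tilde A(D)$ gives $c_0=\kappa-\vect{A}(P_1+\cdots+P_{\tilde g})$, as desired.

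The main obstacle is the genus gap: $\Gamma$ has genus $g$ possibly strictly smaller than $\tilde g$, so although $D$ consists of $\tilde g$ points, the tropical Jacobian $\mathrm{Jac}\,\Gamma=\RR^g/B\ZZ^g$ has rank only $g$. I would have to show that the $\tilde g - g$ points of $D$ whose tropicalizations land on the unbounded rays $\sigma_i$ or on the bridges $\gamma^\pm$ enter $\vect{A}(\sum_k P_k)$ in a well-defined and consistent fashion, and that they are exactly the points accounting for the drop in genus. Equivalently, the delicate step is establishing the commutativity in (i) together with (ii) uniformly: this demands a dominant-exponent analysis of the classical theta function, tracking how its leading Puiseux coefficients assemble into the piecewise-linear $\Theta(\,\cdot\,;B)$, and controlling the base-point and theta-characteristic data entering $\tilde\kappa$ through the limit. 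Once this compatibility is secured, \eqref{c0} is immediate.
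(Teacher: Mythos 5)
The paper offers no proof of Proposition \ref{prop2.9}: it is imported from \cite{Iwao2} and stated without argument, so there is no in-text proof to compare yours against. Your identification of the statement as the tropical shadow of the Mumford--Krichever formula $\tilde c_0=\tilde\kappa-\tilde A(D)$, to be transported through the valuation, is the right circle of ideas and is consistent with how the cited literature treats such statements.

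That said, what you have written is a program rather than a proof, and the gap sits exactly where you locate it. The three compatibilities (i)--(iii) are not auxiliary facts to be ``granted''; together they \emph{are} the proposition. For (i) you would need to show that the periods of $C$ over the Puiseux field have valuations equal to the lattice-length pairings $(\beta_i,\beta_j)$ of Definition \ref{defA5} (recovering Corollary \ref{crl2.3}) and that the Abel--Jacobi integrals tropicalize to the path pairings; none of this is supplied, and it is precisely here that the genus drop $\tilde g\geq g$ must be resolved --- you must identify which $\tilde g-g$ of the $p_k$ tropicalize to points whose image under $\vect{A}$ is trivial (e.g.\ landing on the rays $\sigma_i$ or on contractible parts of $\Gamma$), which you flag but do not do. Compatibility (iii) as phrased is essentially circular: that the valuation of the classical theta solution is the tropical theta solution \emph{with the specific constant} \eqref{c0} is the statement being proved, so $\val(\tilde c_0)=c_0$ cannot be taken as an input. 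A self-contained route would instead run the tropical Riemann vanishing argument directly on $\Gamma$ (the option you mention and discard): show that the zero set of $P\mapsto\Theta(\vect{A}(P)-\vect{A}(P_1+\cdots+P_{\tilde g})+\kappa;B)$ recovers the tropicalized eigenvector divisor at $n=t=0$, using Lemma \ref{ZERO} and the explicit value $\kappa\equiv\frac{L}{2}\cdot{}^t(1,\dots,1)$ computed in the paper. As it stands, the proposal names the difficulties accurately but does not overcome any of them.
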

\subsubsection{Riemann constant}

For general tropical curves, it is hard to calculate the Riemann constant $\kappa$ explicitly. However, for our tropical curves, we have the following simple expression.
\begin{prop}
\[
\kappa\equiv \frac{L}{2} \cdot {}^t(1,1,\dots,1).
\]
\end{prop}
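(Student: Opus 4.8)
The plan is to exploit the hyperelliptic structure of $\Gamma$: the reflection interchanging the two backbones $\gamma^+$ and $\gamma^-$ is an isometric involution $\tau$ that acts as $-\mathrm{id}$ on $H_1(\Gamma,\ZZ)$, hence as $-\mathrm{id}$ on $\mathrm{Jac}\,\Gamma$. Since the tropical theta function $\Theta(\,\cdot\,;B)$ is even, its corner locus is $\tau$-invariant, so the base point $O$ plays the role of a Weierstrass point and $\kappa$ is forced to be a half-period. I would then compute this half-period explicitly via the tropical analogue of the classical hyperelliptic formula for the Riemann constant, which expresses $\kappa$ as the Abel--Jacobi image of a sum of $g$ of the $\tau$-fixed points. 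The reduction of the problem to this computation is the first step.

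First I would make $\tau$ precise. From Proposition \ref{prop2.2}, $\tau$ sends $\gamma^+(t)\mapsto\gamma^-(t)$, fixes $O=\gamma^\pm(0)$, and reverses each rung $\theta_i$, so its fixed locus contains the midpoints $m_i:=\theta_i(\tfrac{L}{2}-A_i)$, i.e.\ the points of $X$-coordinate $L/2$ on the $i$-th rung. Checking on the basis $\{\beta_i\}$ that $\tau_*\beta_i=-\beta_i$ (the cycle up $\gamma^+$, across $\theta_i$, down $\gamma^-$ is carried to its own reverse) shows $\tau_*=-\mathrm{id}$. Together with $\vect{A}(K_\Gamma)\equiv 0$ -- which follows because the four unbounded ends lie on the tree part of $\Gamma$ and contribute trivially to $\vect{A}$, while the remaining vertex contributions cancel in pairs $v_i^+,v_i^-$ under $\tau$ -- this already pins $\kappa$ down to a $2$-torsion point; the remaining task is to identify which one.

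The key computation is $\vect{A}(m_i)=\tfrac12\vect{b}_i$. Taking the path from $O$ up $\gamma^+$ to $\gamma^+(S_i)$ and then halfway along $\theta_i$ to $m_i$, its overlap with $\beta_j$ is $\min[S_i,S_j]$ along the shared part of $\gamma^+$, plus $\tfrac{L}{2}-A_i$ along $\theta_i$ when $j=i$; comparing with Corollary \ref{crl2.3} gives exactly $\vect{A}(m_i)_j=\min[S_i,S_j]+(\tfrac{L}{2}-A_i)\delta_{ij}=\tfrac12 B_{ij}$. Summing over $i$ and using $\sum_i\vect{b}_i=L\cdot{}^t(1,\dots,1)$, I obtain $\sum_{i=1}^g\vect{A}(m_i)=\tfrac12\sum_i\vect{b}_i=\tfrac{L}{2}\cdot{}^t(1,\dots,1)$. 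It then remains to invoke the Riemann-constant description of \S\ref{sec:a.1.5} to conclude $\kappa\equiv\sum_{i=1}^g\vect{A}(m_i)\equiv\tfrac{L}{2}\cdot{}^t(1,\dots,1)$.

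The main obstacle I anticipate is precisely this last identification: the $\pm$-symmetry alone determines $\kappa$ only modulo $2$-torsion, so I must use the finer tropical Riemann vanishing theorem to guarantee that exactly the $g$ midpoints $m_1,\dots,m_g$ (and not, say, a subset of $g-1$ of them, which would differ by the half-period $\tfrac12\vect{b}_g$) enter the formula with base point $O$. Pinning this down requires matching the tropical hyperelliptic formula against the explicit description of the theta divisor and of the locus $\vect{A}(\mathrm{Sym}^{g-1}\Gamma)$, and keeping careful track of the base-point normalization and of the ends $\sigma_1,\dots,\sigma_4$, whose trivial contribution to every Abel--Jacobi image must be verified rather than assumed.
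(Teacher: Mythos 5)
Your identification of the $g$ midpoints $m_i=\theta_i(\tfrac{L}{2}-A_i)$ and the computation $\vect{A}(m_i)=\tfrac12\vect{b}_i$ are exactly right, and they are the heart of the matter. But the step you yourself flag as the ``main obstacle'' is a genuine gap, not a technicality: the ``tropical analogue of the classical hyperelliptic formula for the Riemann constant'' that you invoke is not established in the paper or in the references it uses, and your symmetry argument ($\tau_*=-\mathrm{id}$, $\Theta$ even) only yields $2\kappa\equiv\vect{A}(\text{something})$, i.e.\ determines $\kappa$ modulo $2$-torsion. As you note, replacing $\{m_1,\dots,m_g\}$ by a proper subset shifts the answer by a half-period, so the argument as written cannot distinguish $\tfrac{L}{2}\cdot{}^t(1,\dots,1)$ from $\tfrac{L}{2}\cdot{}^t(1,\dots,1)-\tfrac12\vect{b}_g$, say. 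A proof that ends with ``it remains to invoke'' an unproved identity, plus an acknowledged unresolved ambiguity of exactly the size of the error you are trying to exclude, is not complete.

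The gap closes much more cheaply than you fear, because in this paper $\kappa$ is \emph{defined} (\S\ref{sec:a.1.5}) as $\vect{A}(Q_1+\dots+Q_g)$ where the $Q_i$ are the zeros of the multi-valued function $f=\Theta(\tilde{\vect{A}}(\cdot);B)$ on $\Gamma$ --- no hyperelliptic vanishing theorem is needed, only the determination of the zero set of $f$. Your own computation shows $\tilde{\vect{A}}(\theta_i(t))=\tfrac12 Be_i+(t-\tfrac{L}{2}+A_i)e_i$ along the rung $\theta_i$; Lemma \ref{half} then says $t\mapsto\Theta(\tfrac12 Be_i+te_i;B)$ has a zero of order exactly $1$ at $t=0$, so each $m_i$ is a simple zero of $f$. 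Lemma \ref{ZERO} says $f$ has exactly $g$ zeros counted with multiplicity, so $\{m_1,\dots,m_g\}$ is the entire zero divisor, and $\kappa=\sum_i\vect{A}(m_i)=\tfrac12\sum_i\vect{b}_i=\tfrac{L}{2}\cdot{}^t(1,\dots,1)$ by Corollary \ref{crl2.3}. This is the paper's proof; the involution $\tau$, the claim $\vect{A}(K_\Gamma)\equiv 0$, and the description of $\vect{A}(\mathrm{Sym}^{g-1}\Gamma)$ are all unnecessary once you see that the counting lemma \ref{ZERO} does the work of excluding any other zeros.
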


To prove the proposition, we need some ideas on the tropical geometry introduced in the appendix. Let $\tilde{\vect{A}}:\Gamma\to\RR^g$ be the multi-valued function defined in \S \ref{sec:a.1.5}. Define the multi-valued function $f:\Gamma\to \RR$ by $f(P):=\Theta(\tilde{\vect{A}}(P);B)$.

Denote the point $\theta_i(L/2-A_i)$ in $\Gamma$ by $Q_i$. The set $U=\Gamma\setminus\{Q_1,\dots,Q_g\}$ is a simply connected open subset of $\Gamma$. Let $\vect{A}_0:U\to\RR^g$ be the restriction of $\tilde{\vect{A}}$ to $U$, where we take the branch as $\vect{A}_0(O)=0$. Then $\vect{A}_0$ is single-valued. Define a single-valued function $f_0:U\to \RR$ by $f_0(P):=\Theta(\vect{A}_0(P);B)$.

\begin{lemma}
The point $Q_i$ ($i=1,2,\dots,g$) is a zero of $f$ of degree $1$.
\end{lemma}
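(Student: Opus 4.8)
The plan is to show that $Q_i$ is a zero of the multi-valued function $f(P)=\Theta(\tilde{\vect{A}}(P);B)$ of degree exactly $1$, by locating where the tropical theta function attains the minimum defining its support and analyzing the local behaviour of $\tilde{\vect{A}}$ at $Q_i=\theta_i(L/2-A_i)$. Recall from appendix \ref{appA} that $\Theta(z;B)=\min_{\vect{n}\in\ZZ^g}\{\frac{1}{2}{}^t\vect{n}B\vect{n}+{}^t\vect{n}z\}$, so a zero of $f$ in the tropical sense is a point where the minimizing integer vector $\vect{n}$ is non-unique; the degree of the zero measures the jump in the minimizer as one passes through the point along $\Gamma$. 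First I would compute $\tilde{\vect{A}}(Q_i)$ explicitly using the fact that $Q_i$ lies on the cycle $\beta_i$ at its midpoint. Since $\beta_i$ traverses $\theta_i$ and $\theta_i(t)=(t+A_i,S_i)$, the point $Q_i=\theta_i(L/2-A_i)$ sits exactly halfway along the $\beta_i$-direction of the $i$th period $\vect{b}_i$, so I expect $\tilde{\vect{A}}(Q_i)\equiv \frac{1}{2}\vect{b}_i \pmod{B\ZZ^g}$ (with the appropriate branch), i.e.\ a half-period in the $i$th coordinate.

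Next I would substitute this value into the theta function and identify the competing minimizers. At a half-period $z=\frac{1}{2}\vect{b}_i$, the symmetry $\frac{1}{2}{}^t\vect{n}B\vect{n}+{}^t\vect{n}z$ is invariant under $\vect{n}\mapsto -\vect{n}-\vect{e}_i$ (where $\vect{e}_i$ is the $i$th standard basis vector), because $B$ is symmetric and $z=\frac{1}{2}B\vect{e}_i$; this is the standard mechanism producing a theta zero. Concretely, the two values $\vect{n}=\vect{0}$ and $\vect{n}=-\vect{e}_i$ give equal contributions to the minimum, so at $Q_i$ the minimum is attained (at least) twice. To prove the zero has degree exactly $1$, I would verify that these are the \emph{only} two minimizers and that they are exchanged cleanly as $P$ crosses $Q_i$ along $\Gamma$: on one side $\vect{n}=\vect{0}$ is the unique minimizer, on the other side $\vect{n}=-\vect{e}_i$ is, and no third integer vector ties in. This rules out higher-order vanishing and confirms the degree is $1$.

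The verification that exactly two minimizers tie requires the explicit period matrix $B=((L-2A_i)\delta_{i,j}+2\min[S_i,S_j])_{i,j}$ from Corollary \ref{crl2.3}; I would use its structure to check that for every other $\vect{n}\in\ZZ^g$ the quadratic form strictly exceeds the common minimum value at $z=\frac{1}{2}\vect{b}_i$. Here the positivity and the specific entries of $B$ — in particular that the diagonal term $L-2A_i$ is large relative to the off-diagonal couplings $2\min[S_i,S_j]$ for a genuine pBBS configuration — should force the competing lattice points to lie strictly higher. The main obstacle I anticipate is precisely this uniqueness-of-the-tie estimate: showing that no third integer vector ties at the half-period, which amounts to a careful lattice-point inequality governed by the arithmetic of the $S_i$ and $A_i$. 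The degenerate case where some $S_i=S_{i+1}$ (so that $\theta_i$ and $\theta_{i+1}$ coincide in $\Gamma^0$) will need separate attention, but the distinctness of the cycles $\beta_i$ in $\Gamma$ (guaranteed by Proposition after Figure \ref{fig1}) should ensure the half-periods remain distinct in $\mathrm{Jac}\,\Gamma$ and the argument goes through coordinatewise.
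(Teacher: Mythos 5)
Your reduction coincides with the paper's in its first half: you locate $\tilde{\vect{A}}(Q_i)$ at the half-period $\frac{1}{2}B\vect{e}_i$ (the paper does this explicitly via $\vect{A}(\theta_i(t))=(t\delta_{i,j}+\min[S_i,S_j])_{j}=\frac{1}{2}Be_i+(t-L/2+A_i)e_i$, so that $\theta_i$ crosses the half-period in the direction $e_i$ at $t=L/2-A_i$), and you identify the tie between $\vect{n}=0$ and $\vect{n}=-\vect{e}_i$ produced by the involution $\vect{n}\mapsto-\vect{n}-\vect{e}_i$. That involution is exactly the combination of quasi-periodicity and evenness that the paper packages as lemma \ref{hodai}, and it already shows $Q_i$ is a zero of odd order $\geq 1$: the minimizer set at $\frac{1}{2}B\vect{e}_i$ is stable under $\vect{n}\mapsto-\vect{n}-\vect{e}_i$, so the maximum and minimum of $n_i$ over that set sum to $-1$, and $\mathrm{ord}_{Q_i}(f)$ is their difference.

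The divergence, and the genuine gap, is in the ``degree exactly $1$'' step. Your argument rests entirely on the claim that no minimizer at $\frac{1}{2}B\vect{e}_i$ has $i$-th coordinate outside $\{0,-1\}$ (equivalently $\langle\vect{n},B(\vect{n}+\vect{e}_i)\rangle>0$ for $\vect{n}\neq 0,-\vect{e}_i$), which you defer as ``the main obstacle'' and never establish; as written the proposal only proves $\mathrm{ord}_{Q_i}(f)\geq 1$. The paper closes this differently, by invoking lemma \ref{half}: from the two functional equations of $\Theta$ one gets the identity $f(t)-f(-t)=-t$, and the order is read off from piecewise linearity, with no enumeration of minimizers and no reference to the entries of $B$. (Strictly speaking that identity pins down the sum of the one-sided slopes, so the residual content is the same lattice fact you flagged --- your instinct about where the difficulty sits is sound --- but the paper's route at least isolates it inside a general statement about $\Theta$ at half-periods rather than a case analysis on $B$.) Be warned also that your proposed mechanism, ``$L-2A_i$ is large relative to the off-diagonal couplings,'' is not a routine estimate: $B_{ii}=L-2A_i+2S_i$ while $\sum_{j\neq i}\vert B_{ij}\vert=2(A_i-S_i)$, and for configurations with many solitons and $L$ near its minimum $B$ need not be diagonally dominant, so the required inequality cannot be obtained by crude size comparisons. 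In short: right mechanism, same opening computation as the paper, but the quantitative heart of the exact-degree claim is asserted rather than proved; the functional-equation argument of lemma \ref{half} is the simpler way to finish.
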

\begin{proof}
We may take the branch of $f$ such that $0<t<L/2-A_i\ \Rightarrow\ f(\theta_i(t))=f_0(\theta_i(t))$ without loss of generality. The value of tropical Abel-Jacobi mapping is calculated as $\vect{A}(\theta_i(t))=(t\delta_{i,j}+\min[S_i,S_j])_{j=1}^g$. From the equation
\[
(t\delta_{i,j}+\min[S_i,S_j])_{j=1}^g=\frac{1}{2}Be_i+(t-L/2+A_i)e_i,
\]
we have $f(\theta_i(t))=\Theta(\frac{1}{2}Be_i+(t-L/2+A_i)e_i;B)$. By lemma \ref{half}, we obtain the result.
\end{proof}

\textbf{Proof of the proposition.} By lemma \ref{ZERO}, the set of zeros of $f$ is $\{Q_1,\dots,Q_g\}$. It gives the result.$\qed$
\subsection{Example}
We consider the pBBS of system-size $L = 10$ and the initial state given in \eqref{eq4}. We have $\tilde{g}=2$, $S_1 = 1$, $S_2 = 2$, and therefore obtain $A_1 = 2$, $A_2 = 3$ and
\begin{equation*}
B = 
\begin{pmatrix}
8 & 2 \\
2 & 8
\end{pmatrix}.
\end{equation*}
By calculating $h(x,y)$ in \eqref{eq6} associated with $\mathcal{X}(y)$ in \eqref{eq4}, one can find $P_1 : (2,1)$ and $P_2 : (5,1)$. Their images by the eigenvector mapping are ${}^t (1,1)$ and ${}^t (4,1)$, respectively. Hence, we have $c_0 = {}^t (0,3)$. Now, we obtain the tropical theta function
\begin{equation}\label{eq8}
\Theta_n^t = \Theta \left(  
n \begin{pmatrix} 1 \\ 1 \end{pmatrix} -
t \begin{pmatrix} 1 \\ 2 \end{pmatrix} +
\begin{pmatrix} 0 \\ 3 \end{pmatrix};
\begin{pmatrix}
8 & 2 \\
2 & 8
\end{pmatrix}
\right),
\end{equation}
which expresses the solution for this initial state through theorem \ref{thm2.4}.
\section{Solutions to the non-periodic system}\label{sec3}

In this section, we propose the method to construct solutions to the BBS from the general solutions to the pBBS. Let $\mathcal{X} = \mathcal{X}(y)$ be the $2 \times 2$ matrix defined in the previous section. In the same way, we denote the tropical curve associated with the pBBS by $\Gamma$, and its period matrix by $B$. Moreover, denote $\mathcal{X}[M] := \mathcal{X}H^M (M \ge 1)$. We are interested in the behavior of solutions to the periodic system when $M \to +\infty$. Especially, it implies system-size $L \to +\infty$. Then let us consider the behavior of the function $\Theta^t_n = \Theta (\mu n + \omega t + c_0;B)$ when $L \to +\infty$.

Let $S=-2\,\mathrm{diag}(A_1,\dots,A_g)+(2\min[S_i,S_j])_{i,j}$. Then, we have $B=LE+S$ from corollary \ref{crl2.3}. Define $z=\mu n+\omega t+c_0$. From (\ref{eq5}), the vectors $\mu$ and $\omega$ do not depend on $L$. Due to the equation \eqref{c0}, the problem is reduced to considering the asymptotic behavior of the eigenvector mapping $\mathcal{X}(y)\mapsto D=P_1+\dots+P_{\tilde{g}}$ (theorem \ref{thm2.7}) when $L\to +\infty$.

The value of the eigenvector mapping is characterized by the relations (\ref{eq6}) and (\ref{eq7}). More precisely, we have the following:
\begin{lemma}\label{root0}
Denote the $(i,j)$-component of $\mathcal{X}(y)$ by $a_{i,j}(y)\in K[y]$. The point $(X,Y)\in\Gamma$ is contained in $\{P_1,\dots,P_{\tilde{g}}\}$ if and only if : (i) the polynomial $a_{2,1}(y)$ has a root $y^\ast\in K$ such that $\val(y^\ast)=Y$, (ii) $X=\val(a_{1,1}(y^\ast))$.
\end{lemma}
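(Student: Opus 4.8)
The plan is to make equation \eqref{eq6} fully explicit, read off the affine zeros of $h$ directly from it, and then tropicalize. First I would expand \eqref{eq6} componentwise: writing $\mathcal{X}(y)=(a_{i,j}(y))$, the two scalar equations are
\[
a_{1,1}(y)+a_{1,2}(y)\,h=x,\qquad a_{2,1}(y)+a_{2,2}(y)\,h=x\,h,
\]
which yield the two expressions $h=(x-a_{1,1}(y))/a_{1,2}(y)$ and $h=a_{2,1}(y)/(x-a_{2,2}(y))$ for the rational function $h$ on $C$. These agree on $C$ because $(x-a_{1,1})(x-a_{2,2})=a_{1,2}a_{2,1}$ is exactly the defining equation $f(x,y)=0$.

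Next I would locate the affine zeros of $h$. From the second expression $h$ vanishes precisely at the points of $C$ with $a_{2,1}(y)=0$ (and $x\neq a_{2,2}(y)$). At such a point the characteristic equation forces $(x-a_{1,1})(x-a_{2,2})=0$, so $x=a_{1,1}(y)$ on the relevant branch. Hence the affine zeros of $h$ are exactly the points of the form $(a_{1,1}(y^\ast),y^\ast)$ with $a_{2,1}(y^\ast)=0$.

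Then I would identify these zeros with $D$ and tropicalize. By theorem \ref{thm2.7} we have $(h)_0=D+\infty$, and since $\infty$ is the unique point over $(x,y)=(\infty,\infty)$ it accounts for the only zero at infinity; the remaining affine zeros are therefore precisely $p_1,\dots,p_{\tilde{g}}$. Writing $p_k=(x_k,y_k)$, we obtain $a_{2,1}(y_k)=0$ and $x_k=a_{1,1}(y_k)$, so applying $\val$ gives $Y_k=\val(y_k)$ with $y_k$ a root of $a_{2,1}$, and $X_k=\val(a_{1,1}(y_k))=\val(a_{1,1}(y^\ast))$; this is exactly conditions (i) and (ii) with $y^\ast=y_k$, establishing the forward implication. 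Conversely, a point $(X,Y)\in\Gamma$ satisfying (i) and (ii) arises from a root $y^\ast$ of $a_{2,1}$, and the point $(a_{1,1}(y^\ast),y^\ast)$ lies on $C$ and is an affine zero of $h$, hence equals some $p_k$, whence $(X,Y)=P_k$.

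The hard part will be the genericity needed to keep this correspondence clean. I must exclude the degenerate branch $x=a_{2,2}(y^\ast)$, equivalently the case $a_{1,2}(y^\ast)=0$ or $a_{1,1}(y^\ast)=a_{2,2}(y^\ast)$, where $h$ becomes an indeterminate $0/0$ and the zero of $h$ need not sit at $x=a_{1,1}$. This is precisely where the \emph{general} divisor $D$ furnished by theorem \ref{thm2.7} is used, together with the count $\deg D=\tilde{g}$ matching the number of roots of $a_{2,1}(y)$, so that passage to valuations produces exactly the $\tilde{g}$ tropical points $P_k$ on $\Gamma$ and no spurious ones.
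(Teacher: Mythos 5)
Your proof is correct and follows essentially the same route as the paper's: both reduce membership in $\{P_1,\dots,P_{\tilde{g}}\}$ to the degeneration of the eigenvector ${}^t(1,h)$ to ${}^t(1,0)$ at the affine zeros of $h$, which componentwise is exactly $a_{2,1}(y^\ast)=0$ and $x^\ast=a_{1,1}(y^\ast)$. The paper compresses this into the single line $\mathcal{X}(y^\ast)\cdot{}^t(1,0)=x^\ast\cdot{}^t(1,0)$; you merely make the same computation explicit, including the genericity caveats (the branch $x=a_{2,2}$ and the degree count) that the paper leaves implicit.
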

\begin{proof}
By (\ref{eq6}) and (\ref{eq7}), we have: $(X,Y)\in\Gamma$ is contained in $\{P_1,\dots,P_{\tilde{g}}\}$ $\iff$ there exist two elements $x^\ast,y^\ast\in K$ such that $X=\val(x^\ast)$, $Y=\val(y^\ast)$ and $\mathcal{X}(y^\ast)\cdot {}^t(1,0)=x^\ast\cdot {}^t(1,0)$. This implies the lemma.
\end{proof}
Furthermore, denote by $\Gamma[M]$ the tropical curve defined from $\mathcal{X}[M](y)$, and by $P_1[M], \dots, P_{\tilde{g}}[M] \in \Gamma[M]$ the image of $\mathcal{X}[M](y)$. We have the following theorem, of which proof is given in the next subsection.
\begin{thm}\label{thm3.7}
There exists some large number $m_0>0$ such that for all $M>m_0$, the value $\vect{A}_0(P_1[M]+\dots+P_{\tilde{g}}[M])\in\RR^g$ is constant.
\end{thm}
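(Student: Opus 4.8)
The plan is to understand the effect of right-multiplying $\mathcal{X}(y)$ by $H^M$ on the eigenvector data, using the characterization of the divisor $P_1+\dots+P_{\tilde g}$ given in Lemma \ref{root0}. By that lemma, the points $P_k[M]$ are governed entirely by the $(2,1)$-entry of $\mathcal{X}[M](y)=\mathcal{X}(y)H^M$ and by the $(1,1)$-entry evaluated at its roots. So the first step is to compute how these two entries depend on $M$. Writing $\mathcal{X}(y)=\bigl(\begin{smallmatrix} a_{1,1} & a_{1,2}\\ a_{2,1} & a_{2,2}\end{smallmatrix}\bigr)$ and multiplying by $H^M$, I would extract explicit (or at least valuation-controlled) expressions for the entries of $\mathcal{X}[M]$ as linear combinations of the $a_{i,j}$ with coefficients that are the entries of $H(y)^M$. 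Since $H(y)=\bigl(\begin{smallmatrix}1 & 1\\ y & q\end{smallmatrix}\bigr)$, the powers $H^M$ are controlled by the eigenvalue structure of $H$, and I expect the entries of $H^M$ to grow in a way whose valuations scale linearly in $M$.

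Next I would track the $M$-dependence of the roots $y^\ast=y^\ast[M]$ of the $(2,1)$-entry of $\mathcal{X}[M]$ and of the resulting valuations $Y_k[M]=\val(y_k^\ast)$ and $X_k[M]=\val(a^{[M]}_{1,1}(y_k^\ast))$. The key phenomenon to establish is that, although the curve $\Gamma[M]$ itself changes with $M$ (its period matrix is $B[M]=L[M]E+S$ with $L[M]=L+M$, so the system-size grows), the \emph{image} $\vect{A}_0(P_1[M]+\dots+P_{\tilde g}[M])$ in $\RR^g$ stabilizes once $M$ is large enough. The natural mechanism is that for large $M$ the tropicalized points $P_k[M]$ all migrate onto the ``stable'' portion of the curve — the segments $\gamma^+$, the $\theta_i$, and the half-lines $\sigma_i$ near the origin — and sit at positions that no longer move as $M$ increases further. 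Concretely, I would show the valuations $(X_k[M],Y_k[M])$ become eventually constant, or move only along directions that are killed by $\vect{A}_0$, so that their Abel--Jacobi images freeze.

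The main technical step, which I also expect to be the main obstacle, is the valuation analysis of the roots of a polynomial in $y$ over the Puiseux field $K$ as $M\to\infty$. Here I would invoke the tropical/Newton-polygon correspondence: the valuations of the roots of $a^{[M]}_{2,1}(y)$ are read off from the slopes of the Newton polygon of that polynomial, whose vertex structure is determined by the valuations of the coefficients. Since those coefficients depend on $M$ through $H^M$, the delicate point is to prove that for $M>m_0$ the relevant Newton-polygon slopes — and hence the $Y_k[M]$, together with the induced $X_k[M]=\val(a^{[M]}_{1,1}(y^\ast))$ — stabilize. This requires controlling possible cancellations of leading terms in the Puiseux coefficients, which is where the argument is most likely to need care; a uniform lower bound on $m_0$ must be extracted from the finitely many valuation data of $\mathcal{X}(y)$ and the eigenstructure of $H$.

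Finally, once the stabilized positions $(X_k,Y_k)$ are identified, I would feed them into the formula $\vect{A}_0=$ (restriction of the tropical Abel--Jacobi map to the simply connected set $U=\Gamma\setminus\{Q_1,\dots,Q_g\}$) and verify that the resulting vector in $\RR^g$ is independent of $M$ for $M>m_0$. Since $\vect{A}_0$ is single-valued and piecewise-linear, constancy of the input positions on the stable part of the curve immediately yields constancy of the output, completing the proof.
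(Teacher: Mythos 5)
Your overall strategy coincides with the paper's: by lemma \ref{root0} everything reduces to the valuations of the roots of the $(2,1)$-entry $a_{2,1}[M]$ and the induced valuations of $a_{1,1}[M]$ at those roots, and the paper's initial form $\mathrm{in}_p(f)$ of lemma \ref{lem3.4}, combined with lemma \ref{root}, is exactly the Newton-polygon criterion you invoke. Your three-way picture (points confined to a bounded band of $Y$, points with $Y<0$ killed by $\vect{A}_0$, points in between frozen) also matches the paper's auxiliary lemma with parts (i)--(iii), and your final step --- that constancy of the positions on the relevant part of the curve gives constancy of $\vect{A}_0$ because the Abel--Jacobi values there are given by $L$-independent formulas --- is the same as the paper's conclusion with $m_0:=y_0$.

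There is, however, a genuine gap exactly where you write that ``this is where the argument is most likely to need care'': you defer, rather than resolve, the cancellation problem, and its resolution is the substantive content of the proof. The paper's mechanism is positivity. Every entry of $H$ and $T$, hence every entry of $\mathcal{X}[M]$ and of $H^M$, has coefficients in the semifield $K^+$ of Puiseux series with positive leading coefficient, and on $K^+[y]$ one has the exact equality $v_p(f+g)=\min[v_p(f),v_p(g)]$: no cancellation of leading terms can occur. This is what makes the recursion for the initial forms ($\alpha[M+1]=\alpha[M]$, $\beta[M+1]=\alpha[M]+q\beta[M]$, $\gamma[M+1]=\gamma[M]+\delta[M]$, and so on) close up, and it yields the two facts you assert but do not establish: (a) there is a threshold $y_0$, uniform in $M$, above which $\mathrm{in}_Y(a_{2,1}[M])$ is a monomial, so no root of valuation $Y>y_0$ exists for any $M$ (without this, points could escape upward with $M$-dependent Abel--Jacobi images, and no uniform $m_0$ would exist); and (b) for $M>Y>0$ one has $\mathrm{in}_Y(a_{i,j}[M])=\mathrm{in}_Y(a_{i,j}[Y+1])$, which is the stabilization statement and also produces the explicit bound $m_0=y_0$. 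Relatedly, your expectation that the entries of $H^M$ have valuations ``scaling linearly in $M$'' via the eigenvalues of $H$ points in the wrong direction: in the relevant regime $M>Y$ the paper shows $h_{1,1}[M]=1+o_Y(1)$ and $h_{2,1}[M]=y+o_Y(q^Y)$, i.e.\ the initial forms of the entries of $H^M$ are \emph{independent} of $M$, and it is precisely that independence, not any growth, that drives the argument.
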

\subsection{Proof of theorem \ref{thm3.7}}
First, we introduce some notations for the proof of the theorem. Let $R:=\{x\in K\,\vert\,\val(x)\geq 0\}$ be the valuation ring of $K$. Define two subsets $K^+, R^+$ of $K$ by $K^+:=\{c_0q^{n/d}+c_1q^{(n+1)/d}+c_2q^{(n+2)/d}+\cdots\in K\,\vert\,c_0>0\}$ and $R^+:=R\cap K^+$. These four sets naturally have an algebraic structure as below:
\[
K: \mbox{field},\quad
R: \mbox{ring},\quad
K^+: \mbox{semi-field},\quad
R^+: \mbox{semi-ring}.
\]
The semi-ring $R^+$ will not be used in the sequel.

Let $\mathcal{O}$ be a $K$-algebra. $\mathcal{O}$ is also regarded as an $R$-algebra. For an $R$-subalgebra $\mathcal{O}_R\subset \mathcal{O}$, we define the mapping $v(\ \cdot\ ;\mathcal{O}_R):\mathcal{O}\to\RR\cup\{+\infty\}$ by $v(x;\mathcal{O}_R):=\sup\{r\,\vert\,q^{-r}x\in\mathcal{O}_R\}$. For example, if $\mathcal{O}=K$ and $\mathcal{O}_R=R$, then $v(x;R)=\val(x)$.

We fix $\mathcal{O}=K[y]$ in the rest of the paper. For a rational number $p$, we denote by $\mathcal{O}_R(p)$ the $R$-subalgebra  of $K[y]$ generated by $q^{-p}y$. Let $v_p(\, \cdot\, ):=v(\ \cdot\ ;\mathcal{O}_R(p))$. It immediately follows that $v_p(q^my^n)=m+np$ for any rational number $m$ and any integer $n$.

For any two elements $f,g$ of $K[y]$, we have (i) $v_p(fg)=v_p(f)+v_p(g)$, (ii) $v_p(f+g)\leq \min[v_p(f),v_p(g)]$. Let $K^+[y]$ be the $K^+$-subsemifield of $K[y]$ generated by $y$. By restricting $v_p$ on $K^+[y]$, the above inequality (ii) is improved as follows:
\begin{equation}
f,g\in K^+[y]\ \Rightarrow\ v_p(f+g)=\min[v_p(f),v_p(g)].
\end{equation}

Because $\mathcal{O}_R(p)$ is an $R$-algebra, the substitution $q\mapsto 0$ is well-defined over $\mathcal{O}_R(p)$. For example, for the element $y\in \mathcal{O}_R(p)$ ($p\geq 0$), the substitution procedure is expressed as
\begin{equation*}
y=q^p(q^{-p}y)\mapsto 
\left\{
\begin{array}{ll}
y, & p=0, \\
0, & p>0.
\end{array}
\right.
\end{equation*}
For $f \in \mathcal{O}_R(p)$, we denote by $f_{(p)}$ the element of $\CC[q^{-p}y]$ which is obtained by substitution $q\mapsto 0$. For example, $y_{(0)}=y$, $(q^{-1}y+2y^2+3q^{-2}y^2)_{(1)}=q^{-1}y+3q^{-2}y^2$, \textit{etc.}

\begin{rem}
Let $I=\{x\in R\,\vert\,\val(x)>0\}$ be the maximal ideal of $R$. Then, the substitution $q\mapsto 0$ is equivalent to the algebraic mapping $\mathcal{O}_R(p)\to \mathcal{O}_R(p)\otimes_R(R/I)$; $f\mapsto f\otimes 1$. In fact, under the natural isomorphism $\mathcal{O}_R(p)\otimes_R(R/I)\simeq\CC[q^{-p}y]$, we have $f\otimes 1\simeq f_{(p)}$. It would be more natural to express the substitution procedure by the algebraic mapping. However, we do not use this expression to avoid redundant notations hereafter.
\end{rem}

\begin{rem}\label{rem1}
By the definition of $v_p$, we have $q^{-v_p(f)} \cdot f\in\mathcal{O}_R(p)$ for any non-zero $f\in K[y] $. We can check $(q^{-v_p(f)}\cdot f)_{(p)}\neq 0$, which is not always true for arbitrary $K$-algebra $\mathcal{O}$ and $R$-subalgebra $\mathcal{O}_R$. In fact, this is not true for $\mathcal{O}=K((y))$, $\mathcal{O}_R=R((y))$.
\end{rem}

\begin{lemma}\label{lem3.4}
For non-zero $f\in K[y]$, there uniquely exists a non-zero polynomial $\mathrm{in}_p(f)\in\CC[q^{-p}y]$ such that $v_p(f-q^{v_p(f)}\cdot \mathrm{in}_p(f))>v_p(f)$.
\end{lemma}
\begin{proof}
It is sufficient to put $\mathrm{in}_p(f):=(q^{-v_p(f)}\cdot f)_{(p)}$. In fact, we have $ (q^{-v_p(f)}\cdot f- \mathrm{in}_p(f))_{(p)}=0$, which implies $v_p(q^{-v_p(f)}\cdot f- \mathrm{in}_p(f))>v_p(q^{-v_p(f)}\cdot f)$.
\end{proof}

\begin{rem}
In the situation as in lemma \ref{lem3.4}, we denote $f=q^{v_p(f)}\cdot\mathrm{in}_p(f)+o_p(q^{v_p(f)})$.
\end{rem}
\begin{lemma}\label{root}
For $f\in K[y]$, the following (i) and (ii) are equivalent:
\begin{enumerate}
\def\labelenumi{(\theenumi)}
\def\theenumi{\roman{enumi}}
\item There exists some $y^\ast\in K$ such that $f(y^\ast)=0$ and $\val(y^\ast)=p$.
\item The polynomial $\mathrm{in}_p(f)$ has a non-zero root.
\end{enumerate}
\end{lemma}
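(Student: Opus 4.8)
The plan is to read off everything from the leading ($q^0$) terms. Writing $f=\sum_n c_n y^n$ and $\nu:=v_p(f)=\min_n(\val(c_n)+np)$, attained exactly on the index set $N:=\{n\,\vert\,\val(c_n)+np=\nu\}$, I would first record the explicit shape
\[
\mathrm{in}_p(f)(w)=\sum_{n\in N}\mathrm{lc}(c_n)\,w^n\qquad(w:=q^{-p}y),
\]
where $\mathrm{lc}(c_n)\in\CC^\times$ is the leading coefficient of the Puiseux series $c_n$; this follows by noting that $q^{np-\nu}c_n\in R$ has a nonzero reduction modulo $q$ exactly for $n\in N$. Factoring out the lowest power of $w$ and applying the fundamental theorem of algebra, $\mathrm{in}_p(f)$ has a non-zero root if and only if $\vert N\vert\geq 2$, i.e.\ if and only if the minimum defining $v_p(f)$ is attained at least twice. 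Thus (ii) is equivalent to this purely tropical condition, and the lemma becomes the Newton-polygon statement that the valuations of the roots of $f$ are exactly the numbers $p$ at which the minimum is attained twice.

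The implication (i)$\Rightarrow$(ii) is the easy half: if $f(y^\ast)=0$ with $\val(y^\ast)=p$ and $w_0:=\mathrm{lc}(y^\ast)\neq 0$, then reducing the identity $q^{-\nu}f(y^\ast)=\sum_n (q^{np-\nu}c_n)(q^{-p}y^\ast)^n$ modulo $q$ — legitimate since $q\mapsto 0$ is the ring homomorphism $R\to R/I\cong\CC$ — yields $\mathrm{in}_p(f)(w_0)=0$, so $w_0$ is a non-zero root. The substantive half is (ii)$\Rightarrow$(i): producing an actual $y^\ast\in K$ of valuation $p$ from a complex root $w_0$ of $\mathrm{in}_p(f)$. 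I would settle this uniformly by invoking that $K$, the Puiseux field over the algebraically closed $\CC$ of characteristic $0$, is itself algebraically closed, so $f=c\prod_k(y-y_k)$ with $c\in K^\times$ and $y_k\in K$ (the case of constant $f$ being trivial). Since $v_p(fg)=v_p(f)+v_p(g)$ and $q\mapsto 0$ is multiplicative into the domain $\CC[w]$, the initial form $\mathrm{in}_p$ is multiplicative, whence $\mathrm{in}_p(f)=\mathrm{lc}(c)\prod_k\mathrm{in}_p(y-y_k)$. A one-line computation with $p_k:=\val(y_k)$ shows that $\mathrm{in}_p(y-y_k)$ equals $w-\mathrm{lc}(y_k)$ if $p_k=p$, the monomial $w$ if $p_k>p$, and a non-zero constant if $p_k<p$; hence a factor contributes a non-zero root precisely when $p_k=p$. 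Therefore $\mathrm{in}_p(f)$ has a non-zero root if and only if some $y_k$ has valuation $p$, which is exactly (i). (This argument in fact proves both implications at once.)

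I expect the main obstacle to be precisely this reverse implication — the existence of the root. Algebraic closedness of $K$ dispatches it cleanly, but if a self-contained argument is wanted one must run the Newton--Puiseux iteration by hand: take $w_0 q^p$ as the leading term of $y^\ast$, substitute, check that the equation for the correction again has a non-zero initial root of strictly larger $q$-order, and prove that the successive approximations converge $q$-adically in $K$. The delicate point there is controlling the ramification (the denominators $1/d$) so that the limit genuinely lies in $K$ rather than in some larger field, which is exactly the content packaged in the Newton--Puiseux theorem.
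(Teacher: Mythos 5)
The paper states Lemma \ref{root} without proof --- it is quoted as a standard Newton--polygon fact --- so there is no in-paper argument to compare against; your job was therefore to supply the whole proof, and what you wrote is correct and complete. The identity $v_p(f)=\min_n(\val(c_n)+np)$ and the explicit formula $\mathrm{in}_p(f)=\sum_{n\in N}\mathrm{lc}(c_n)w^n$ follow directly from $\mathcal{O}_R(p)=R[q^{-p}y]$ and the definition $\mathrm{in}_p(f)=(q^{-v_p(f)}f)_{(p)}$; the reduction argument for (i)$\Rightarrow$(ii) is sound because $q\mapsto 0$ is the ring homomorphism $R\to R/I\cong\CC$ applied to an identity all of whose terms lie in $R$; and the factorization argument for (ii)$\Rightarrow$(i) correctly combines the paper's own multiplicativity $v_p(fg)=v_p(f)+v_p(g)$ (together with remark \ref{rem1}, which guarantees the initial forms of the factors are nonzero, so that $\mathrm{in}_p$ is multiplicative into the domain $\CC[w]$) with the algebraic closedness of $K$, i.e.\ Puiseux's theorem over $\CC$. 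The case analysis of $\mathrm{in}_p(y-y_k)$ according to the sign of $\val(y_k)-p$ is exactly right, and it even handles the degenerate root $y_k=0$ (valuation $+\infty$, falling under the monomial case). You also correctly identify where the genuine content sits: the existence half, which you discharge by invoking algebraic closedness of the Puiseux field rather than running the Newton iteration by hand --- a legitimate and clean choice, and arguably the intended one given that the paper treats the lemma as known.
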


Next, we prove the following lemma. 
\begin{lemma}
\begin{enumerate}
\def\labelenumi{(\theenumi)}
\def\theenumi{\roman{enumi}}
\item There exists some $y_0$ such that $\mathrm{in}_Y(a_{i,1}[M])$ ($i=1,2, M=0,1,2,\dots$) is a monomial for all $Y>y_0$,
\item If $M>Y>0$, then $\mathrm{in}_Y(a_{i,j}[M])=\mathrm{in}_Y(a_{i,j}[Y+1])$,
\item A point $P_i[M]\in\Gamma$ which is contained in $\{(X,Y) | Y<0\}$ satisfies $\vect{A}_0(P_i[M])=0$.
\end{enumerate}
\end{lemma}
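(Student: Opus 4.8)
The unifying idea is that every entry of $\mathcal{X}[M]=\mathcal{X}H^M$ lies in the semifield $K^+[y]$, so the sharpened rule $v_p(f+g)=\min[v_p(f),v_p(g)]$ holds with no cancellation and $\mathrm{in}_p$ is multiplicative; I will read off all three assertions from the right-multiplication recursion $\mathcal{X}[M]=\mathcal{X}[M-1]H$, that is, $a_{i,1}[M]=a_{i,1}[M-1]+y\,a_{i,2}[M-1]$ and $a_{i,2}[M]=a_{i,1}[M-1]+q\,a_{i,2}[M-1]$.

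For (i) I first record the low-order shape of the first column ${}^t(u_M,v_M)$ of $H^M$. Its own recursion $u_M=u_{M-1}+v_{M-1}$, $v_M=y\,u_{M-1}+q\,v_{M-1}$ gives $u_M=1+O(y)$ and $v_M=(1+q+\cdots+q^{M-1})\,y+O(y^2)$ for every $M$. Since $a_{i,1}[M]=a_{i,1}[0]\,u_M+a_{i,2}[0]\,v_M$ with $a_{i,1}[0],a_{i,2}[0]\in R[y]$ fixed, for large $Y$ the operator $\mathrm{in}_Y$ picks out the lowest $y$-degree contribution; positivity forbids cancellation, and the $M$-dependence survives only through $1+q+\cdots+q^{M-1}$, whose lowest-order coefficient is $1$ independently of $M$. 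Hence $\mathrm{in}_Y(a_{i,1}[M])$ is a monomial for all $M$ as soon as $Y>y_0$, where $y_0$ is read off from the finitely many $y$-degrees appearing in $\mathcal{X}$.

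For (ii) fix $Y>0$ and set $\alpha_M:=v_Y(a_{i,1}[M])$, $\beta_M:=v_Y(a_{i,2}[M])$. The recursion and the no-cancellation rule give $\alpha_M=\min[\alpha_{M-1},\,Y+\beta_{M-1}]$ and $\beta_M=\min[\alpha_{M-1},\,1+\beta_{M-1}]$, so $\alpha_M$ is non-increasing while the only genuinely new material at each step is carried by the tower of powers $q^k$ with $v_Y(q^k)=k$. The plan is to show that once $M>Y$ every such newly produced term has valuation strictly above the minimal ($=$ initial-form) level, so that neither the $y\,a_{i,2}$ nor the $q\,a_{i,2}$ contribution can reach $\mathrm{in}_Y$ any longer; thus $\mathrm{in}_Y(a_{i,j}[M])$ is frozen and equals $\mathrm{in}_Y(a_{i,j}[Y+1])$. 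This bookkeeping is the main obstacle: I must control both valuations simultaneously along the recursion and verify that no lower $y$-power re-enters the initial form after the $q$-tower is spent, which is exactly what pins the threshold at $M>Y$.

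For (iii) I turn to the geometry of the curve. By Proposition \ref{prop2.2} and Figure \ref{fig1}, the locus $\{Y<0\}$ meets the curve only along the unbounded leg $\sigma_1(t)=(-Lt,-2t)$, which issues from the base point $O=(0,0)$, lies in $U=\Gamma\setminus\{Q_1,\dots,Q_g\}$, and belongs to no cycle $\beta_i$. The tropical holomorphic $1$-forms defining $\vect{A}$ (\S\ref{sec:a.1.5}) are supported on the cycle space, hence vanish along $\sigma_1$; therefore the single-valued branch $\vect{A}_0$ is constant on $\sigma_1$ and equals $\vect{A}_0(O)=0$. Since any $P_i[M]$ with $Y<0$ lies on this leg by Lemma \ref{root0}, we conclude $\vect{A}_0(P_i[M])=0$.
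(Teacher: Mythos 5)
Your treatments of (i) and (iii) are sound and essentially the paper's. For (i), the absence of cancellation in $K^+[y]$ together with $u_M=1+O(y)$ and $v_M=(1+q+\cdots+q^{M-1})y+O(y^2)$ does force the lowest $y$-degree term of $a_{i,1}[M]$ to have $M$-independent leading behaviour; the paper phrases the same idea as an induction on the leading coefficients $\alpha[M],\gamma[M],\dots$ of the entries. (Do make explicit that your threshold $y_0$ is uniform in $M$: this holds because all coefficients of $a_{i,1}[M]$ lie in $R^+$ and so have valuation $\geq 0$, while the lowest-degree coefficient has fixed valuation, so $Y$ larger than that fixed valuation suffices for every $M$.) For (iii), the observation that a path from $O$ to a point with $Y<0$ runs along $\sigma_1$ and overlaps each cycle $\beta_i$ only in the single point $O$, so that every pairing $(\beta_i,\gamma_P)$ vanishes, is exactly what the paper means by ``straightforward from Proposition~\ref{prop2.2}.''

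Part (ii), however, is not proved. You set up the valuation recursion $\alpha_M=\min[\alpha_{M-1},Y+\beta_{M-1}]$, $\beta_M=\min[\alpha_{M-1},1+\beta_{M-1}]$ and then explicitly defer ``the main obstacle,'' which is precisely the content of the statement: why the initial form freezes once $M>Y$ and not later. Two concrete issues. First, valuations alone cannot settle it: when the two summands of $a_{i,1}[M]=a_{i,1}[M-1]+y\,a_{i,2}[M-1]$ have equal $v_Y$, the initial form is the \emph{sum} of the two initial forms and genuinely changes even though $v_Y$ does not, so you must track $\mathrm{in}_Y$ itself and show in particular that $a_{i,1}[M]$ and $a_{i,2}[M]$ acquire the \emph{same} valuation and initial form before the freezing argument can start. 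Second, the threshold $M>Y$ has a specific source that your recursion leaves hidden. The paper's route is a closed-form estimate of $H^M$: for $M>Y$ one has $h_{1,1}[M]=1+o_Y(1)$, $h_{1,2}[M]=1+o_Y(1)$, $h_{2,1}[M]=y+o_Y(q^Y)$, $h_{2,2}[M]=y+o_Y(q^Y)$, and it is the last estimate that needs $M>Y$, because $h_{2,2}[M]$ contains the summand $q^M$ with $v_Y(q^M)=M$, negligible against $v_Y(y)=Y$ exactly when $M>Y$. Feeding these into $a_{i,j}[M]=a_{i,1}h_{1,j}[M]+a_{i,2}h_{2,j}[M]$ and using $v_Y(f+g)=\min[v_Y(f),v_Y(g)]$ on $K^+[y]$ yields $\mathrm{in}_Y(a_{i,j}[M])=\mathrm{in}_Y(a_{i,1}+y\,a_{i,2})$ for every $M>Y$ and $j=1,2$, which is manifestly independent of $M$ and hence equals its value at $M=Y+1$. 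Either import this estimate of $H^M$, or carry your step-by-step bookkeeping through to the same identity; as written, the decisive step is announced but absent.
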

\begin{proof}
Proof of (i): Note that for any $f=y^m(a_0+a_1y+\dots+a_ny^n)\in K[y]$, ($a_i\in K,a_0\neq 0$), the inequality $p>\max_i[\frac{1}{i}(\val({a_0}) - \val(a_i))]$ implies $\mathrm{in}_p(f)=\mathrm{in}_p(a_0y^m)$. Because of the expression $H,T=\{\mbox{an upper triangle matrix over $K^+$}\}+y\cdot\{\mbox{an lower triangle matrix over $K^+$}\}$, we have the following relations for sufficiently large $\zeta$.
\[
Y>\zeta \quad\Rightarrow\quad
\begin{array}{c}
\mathrm{in}_Y(a_{1,1})=\mathrm{in}_Y(\alpha),\quad
\mathrm{in}_Y(a_{1,2})=\mathrm{in}_Y(\beta),\\
\mathrm{in}_Y(a_{2,1})=\mathrm{in}_Y(\gamma y),\quad
\mathrm{in}_Y(a_{2,2})=\mathrm{in}_Y(\delta+\epsilon y),
\end{array}
\]
where $\alpha,\beta,\gamma,\delta,\epsilon\in K^+$. 

Define inductively $\alpha[M],\beta[M],\dots\in K^+$ $(M=0,1,2,\dots)$ by the formulas $\alpha[M+1]=\alpha[M]$, $\beta[M+1]=\alpha[M]+q\beta[M]$, $\gamma[M+1]=\gamma[M]+\delta[M]$, $\delta[M+1]=q\delta[M]$, $\epsilon[M+1]=\gamma[M]+\epsilon[M]$, where $\alpha[0]=\alpha, \beta[0]=\beta,\dots$ \textit{etc.} Let $y_0:=\max[\zeta,\val(\beta)-\val(\alpha),\val(\gamma)-\val(\epsilon)]$. Then, we have the following relations by induction on $M$.
\[
Y>y_0\quad\Rightarrow\quad
\begin{array}{c}
\mathrm{in}_Y(a_{1,1}[M])=\mathrm{in}_Y(\alpha[M]),\quad
\mathrm{in}_Y(a_{1,2}[M])=\mathrm{in}_Y(\beta[M]),\\
\mathrm{in}_Y(a_{2,1}[M])=\mathrm{in}_Y(\gamma[M] y),\quad
\mathrm{in}_Y(a_{2,2}[M])=\mathrm{in}_Y(\delta[M]+\epsilon[M] y).
\end{array}
\]

Proof of (ii): Let $h_{i,j}[M]$ be the $(i,j)$-component of $H^M$. Then it follows that
\begin{equation}\label{in}
M>Y\ \Rightarrow\ 
\begin{array}{c}
h_{1,1}[M]=1+o_Y(1),\quad h_{1,2}[M]=1+o_Y(1),\\
h_{2,1}[M]=y+o_Y(q^Y),\quad h_{2,2}[M]=y+o_Y(q^Y).
\end{array}
\end{equation}
This implies $v_Y(a_{i,1}[M+1])=v_Y(a_{i,2}[M+1])=v_Y(a_{i,1}[M]+ya_{i,2}[M])$, ($i=1,2$). Again using (\ref{in}), we obtain $$\mathrm{in}_Y(a_{i,1}[M+2])=\mathrm{in}_Y(a_{i,1}[M+1]+ya_{i,2}[M+1])=\mathrm{in}_Y(a_{i,1}[M+1]),\ \ i=1,2.$$ Therefore, $M>Y\Rightarrow\mathrm{in}_Y(a_{i,1}[M])=\mathrm{in}_Y(a_{i,1}[Y+1])$.

Proof of (iii): It is straightforward from proposition \ref{prop2.2}.
\end{proof}
Now, we proceed for the proof of the theorem. 
\begin{proof}
It follows that any point $P_i[M]$ is contained in the domain $\{Y\leq y_0\}$ from (i) and lemma \ref{root0}. By (iii), we need not consider any points $P_i[M]$ contained in the domain $\{Y<0\}$. From (ii) and lemma \ref{root0}, the $X$ and $Y$-coordinates of points $P_i[M]$ contained in the domain $\{0\leq Y\leq y_0\}$ are constant for any $M$ greater than $y_0$. Then we obtain the theorem when we put $m_0:=y_0$.
\end{proof}

\subsection{Limiting procedure for the tropical theta function solution}

By the results of the previous sections, we have $$B=LE+S,\quad z=L/2\cdot{}^t(1,1,\dots,1)+\mu n+\omega t+c$$ for sufficiently large $L$. Substituting this to the tropical theta function, we get
\[
\Theta(z;B)=\min_{r\in\ZZ^g}[\frac{L}{2}(\langle r,r\rangle+\langle r,e\rangle)+
\frac{1}{2}\langle r,Sr\rangle+\langle r,\mu n+\omega t+c\rangle],
\]
where $e=e_1+\dots+e_g$. When $L$ tends to $+\infty$, it follows that
\begin{align*}
\Theta(z;B)
&=\min_{r\in\{-1,0\}^g}[\frac{L}{2}(\langle r,r\rangle+\langle r,e\rangle)+
\frac{1}{2}\langle r,Sr\rangle+\langle r,\mu n+\omega t+c\rangle]\\
&=\min_{r\in\{0,1\}^g}[\frac{L}{2}(\langle r,r\rangle-\langle r,e\rangle)+
\frac{1}{2}\langle r,Sr\rangle-\langle r,\mu n+\omega t+c\rangle]\\
&\stackrel{L\to+\infty}{\longrightarrow}\min_{r\in\{0,1\}^g}[\frac{1}{2}\langle r,Sr\rangle-\langle r,\mu n+\omega t+c\rangle].
\end{align*}

Let $T_n^t=\min_{r\in \{0,1\}^g}[\frac{1}{2}\langle r,Sr\rangle-\langle r,\mu n+\omega t+c\rangle]$. Then, we obtain the main theorem in this paper.
\begin{thm}
Let $U_n^t$ be the general solution of pBBS. The $\tilde{U}_n^t$ below satisfies the BBS.
\[
\tilde{U}_n^t:=
\lim_{L\to +\infty}{U_n^t}=T_n^t+T_{n+1}^{t+1}-T_{n+1}^t-T_n^{t+1}.
\]
\end{thm}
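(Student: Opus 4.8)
The plan is to verify the claim in two stages: first establish that the limit defining $\tilde{U}_n^t$ exists and produces the stated formula via $T_n^t$, and then confirm that the resulting lattice function actually solves the BBS evolution equation. The first stage is essentially bookkeeping: I would take the expression $\Theta(z;B)$ computed in the previous subsection, where $B=LE+S$ and $z=\frac{L}{2}{}^t(1,\dots,1)+\mu n+\omega t+c$, and combine Theorem \ref{thm2.4} with the $L\to+\infty$ limit already carried out there. The key observation is that the $L$-dependence of $\Theta(z;B)$ is confined to the term $\frac{L}{2}(\langle r,r\rangle-\langle r,e\rangle)$, which is minimized precisely on $r\in\{0,1\}^g$ (where it vanishes) and grows linearly in $L$ for all other $r$; hence for $L$ large the minimum is attained on the cube $\{0,1\}^g$, and the limit is exactly $T_n^t$. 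I would emphasize that by Theorem \ref{thm3.7} the constant $c$ (equivalently $c_0$) stabilizes for $M>m_0$, so the limit is well-defined and independent of the limiting scheme.

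The second and substantive stage is to show that $\tilde{U}_n^t=T_n^t+T_{n+1}^{t+1}-T_{n+1}^t-T_n^{t+1}$ satisfies the BBS evolution equation. My approach here would be to exploit the fact that the pBBS solution $U_n^t$ satisfies the pBBS evolution equation for every finite $L$ (this is Theorem \ref{thm2.4}), and that the pBBS and BBS share the same local update rule; the periodic boundary condition is the only difference. Concretely, I would argue that for each fixed $(n,t)$ the quantity $U_n^t$ converges to $\tilde{U}_n^t$ as $L\to+\infty$, and that each side of the pBBS relation
\[
U_n^{t+1}=\min\Bigl[1-U_n^t,\ \sum_{k}U_k^t-\sum_{k}U_k^{t+1}\Bigr]
\]
converges termwise to the corresponding BBS relation. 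The crucial point is that for large $L$ the ball configuration is supported on a bounded region (the solitons occupy $O(\sum_i S_i)$ boxes), so the infinite sums $\sum_{k=-\infty}^{n-1}U_k^t$ in the BBS are genuinely finite and agree with the truncated periodic sums once $L$ exceeds the support.

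The main obstacle I anticipate is controlling the interchange of the limit $L\to+\infty$ with the $\min$ and with the (formally infinite) summations in the BBS evolution equation. Because ultradiscrete operations are only piecewise-linear and not smooth, convergence of $U_n^t$ must be argued carefully: I would show that, for fixed $(n,t)$, the index $r$ achieving the minimum in $\Theta(z;B)$ eventually lies in $\{0,1\}^g$ and is eventually independent of $L$, so that $U_n^t$ is in fact \emph{eventually constant} in $L$ rather than merely convergent. This eventual-constancy is what makes the termwise passage to the limit rigorous, and it rests on the stabilization in Theorem \ref{thm3.7} together with the linear-in-$L$ penalty for $r\notin\{0,1\}^g$. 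Once eventual constancy is established, the BBS equation for $\tilde{U}_n^t$ follows immediately from the pBBS equation for $U_n^t$ at any sufficiently large $L$, since both sides have already stabilized to their limiting values.
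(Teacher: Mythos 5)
Your first stage reproduces the paper's argument exactly: the same decomposition $B=LE+S$ and $z=\frac{L}{2}\,{}^t(1,\dots,1)+\mu n+\omega t+c$ (with $c$ stabilized via Theorem \ref{thm3.7}), the observation that the $L$-linear penalty $\frac{L}{2}(\langle r,r\rangle-\langle r,e\rangle)$ forces the minimizer into $\{0,1\}^g$ where it vanishes, and the resulting limit $T_n^t$. Your second stage --- eventual constancy of $U_n^t$ in $L$ for fixed $(n,t)$, so that the BBS evolution equation is inherited termwise from the pBBS equation on a finitely supported configuration --- is left implicit in the paper, which states the theorem immediately after the limit computation; your explicit handling of the interchange of $\lim_{L\to+\infty}$ with $\min$ and with the sums is a correct and welcome way to close that gap rather than a genuinely different route.
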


\begin{rem}
This limiting procedure is a tropical analog of the method in \cite{Mumford} Part III b \S 4.
\end{rem}
\subsection{Example}
For example, from the solution of the pBBS \eqref{eq8}, we obtain 
\begin{equation*}
T_n^t = \min [0, -n+t-3, -n+2t-1, -2n+3t-8 ],
\end{equation*}
which gives the two-soliton solution of the BBS.
\section{Concluding remarks}
In this paper, starting with a given initial state of the pBBS, we have constructed the solution of the BBS. Its initial state is given by inserting infinite number of empty boxes into that of the pBBS. Theorem \ref{thm3.7}, which evaluates the asymptotic behaviour of the eigenvector mapping, plays an essential role when the system-size tends to infinity. The obtained solution is identical to the well-known soliton solution of BBS. Moreover, our method gives a tropical analogue of the Krichever construction and establishes an application of tropical geometry to integrable cellular automata.

Our method can be readily extended to a system represented by Lax pair, for example, the two-dimensional box and ball system and the discrete Painlev\'{e} equations \cite{murata,tuda}. It is a future problem to study such systems in the context of the tropical geometry.
\section*{Acknowledge}
This work was supported by JSPS KAKENHI 21-1939, 23740091 and 21760063.
\appendix

\section{Review of the tropical geometry}\label{appA}

In this section, we introduce some ideas of the tropical geometry. For details, see \cite{IMS}.

\subsection{Definitions}

\subsubsection{tropical curve}

Let $K$ be the Puiseux series field of indeterminate $q$ over $\CC$, and $\val:K\to \QQ\cup\{+\infty\}$ be the valuation. For a polynomial \[
\textstyle
\Phi(x,y)=\sum_{w=(w_1,w_2)\in\ZZ^2}{a_wx^{w_1}y^{w_2}}\qquad
\mbox{($a_w=0$ except for finitely many $w$)}
\]
in $x$ and $y$ over $K$, we define $\textstyle \Val_\Phi(X,Y):=\min_w[\val(a_w)+w_1X+w_2Y]$. Let $\Gamma^0$ be the subset of $\RR^2$ defined by
\[
\Gamma^0=\{(A,B)\in\RR^2\,\vert\,\mbox{the continuous map }\Val_\Phi:\RR^2\to \RR\mbox{ is indifferentiable at $(X,Y)=(A,B)$}\}.
\]
For a point $P \in \Gamma^0$, define the finite set $\Lambda(P)\subset \ZZ^2$ by
\[
\Lambda(P):=\{w\in\ZZ^2\,\vert\,\Val_\Phi(X,Y)=\val(a_w)+w_1X+w_2Y\},\quad P=(X,Y).
\]
Let $\Phi_P$ be the polynomial over $K$ defined by $\Phi_P:=\sum_{w\in \Lambda(P)}{a_wx^{w_1}y^{w_2}}$.

We regard the polynomial $\Phi_P$ as an element of the extended ring $K[x,x^{-1},y,y^{-1}]$. The multiplicity of the point $P$ is a positive number $\vartheta(P)$ which is defined by the following formula.
\[
\vartheta(P):=\sharp \{\mbox{irreducible components of $\Phi_P\in K[x^{\pm 1},y^{\pm 1}]$}\}.
\]

\begin{defi}\label{a1}
The tropical curve associated with $\Phi$ is a connected finite graph, which may have edges of infinite length such that (i) there exists a finite surjection $\iota:\Gamma\to\Gamma^0$ which is locally isomorphic except for finitely many points, (ii) $\sharp\{\iota^{-1}(P)\}=\vartheta(P)$ for all $P\in\Gamma^0$. Such $\Gamma$ uniquely exists.
\end{defi}

Note that the slope of edges of $\Gamma^0$ is a rational number. By using the lattice length of $\RR^2$, we equip $\Gamma^0$ with the structure of metric graph. Pulling back by the almost locally isomorphic map $\iota$, we equip $\Gamma$ with metric as well. For two points $P$, $Q$ $\in$ $\Gamma$, denote by $\mathrm{dist}(P,Q)$ the distance between $P$ and $Q$. 

\subsubsection{Piecewise linear functions over $\Gamma$}

A continuous function $f:\Gamma\to\RR$ is piecewise linear if the limit 
\[
D_\gamma(f):=\lim_{t\to 0^+}{\frac{f(\gamma(t))-f(\gamma(0))}{\mathrm{dist}(\gamma(t),\gamma(0))}}
\]
is an integer for any path $\gamma:[0,1]\to \Gamma$. Let $\mathcal{L}$ be the set of piecewise linear functions over $\Gamma$. For a point $P\in\Gamma$, we define the subset $T_P\subset \mathrm{Hom}(\mathcal{L},\ZZ)$ by $T_P:=\{D_\gamma:\mathcal{L}\to\ZZ\,\vert\,\gamma(0)=P\}$. Note that the set $T_P$ is finite for any $P$.

The degree of $f$ at $P$ is defined by $\ord_P(f):=-\sum_{D_\gamma\in T_P}{D_{\gamma}f}$. By definition, we have $\ord_P(f+g)=\ord_P(f)+\ord_P(g)$, $\ord_P(kf)=k\cdot\ord_P(f)$ ($k\in\ZZ$).

\begin{defi}
A piecewise linear function $f:\Gamma\to\RR$ is rational if (i) $f$ is bounded (ii) $\ord_P(f)\neq 0$ for finitely many $P$. We say that $P$ is a zero of $f$ if $\ord_P(f)>0$, and that $P$ is a pole of $f$ if $\ord_P(f)<0$.
\end{defi}

\subsubsection{Divisor group and Picard group}

Let $\mathrm{Div}\,\Gamma:=\bigoplus_{P\in\Gamma}{\ZZ\cdot P}$ be the free abelian group generated by points in $\Gamma$. For a rational function $f$, define the divisor $(f)$ by $(f):=\sum_{P\in\Gamma}{\mathrm{ord}_P(f)\cdot P}\in\mathrm{Div}\,\Gamma$.

\begin{defi}
The quotient group of $\mathrm{Div}\,\Gamma$ divided by the equivalent relation $D_1\sim D_2\iff D_1-D_2=(f),\ (f\mbox{ is a rational function})$ is called the Picard group of $\Gamma$. Denote the Picard group of $\Gamma$ by $\mathrm{Pic}\,\Gamma$.
\end{defi}

A degree of a divisor $D=\sum{n_p\cdot p}$ is the integer $\sum{n_p}$. By the following theorem, we can define the degree of an element of  $\mathrm{Pic}\,\Gamma$ as well.

\begin{thm}
Let $D\in\mathrm{Div}\,\Gamma$. Then, $D\sim 0$ $\iff$ the degree of $D$ is $0$D
\end{thm}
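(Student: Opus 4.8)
The plan is to reduce the whole statement to understanding the integer $\deg(f)=\sum_{P\in\Gamma}\ord_P(f)$ attached to a rational function $f$, since the forward implication $D\sim 0\Rightarrow \deg D=0$ is exactly what legitimizes the degree homomorphism on $\mathrm{Pic}\,\Gamma$ announced just before the statement. So I would split the proof into the two implications and treat the forward one first, as it is elementary and self-contained, and the reverse one second, as it carries the real content.

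For the forward direction I would show $\deg(f)=0$ for every rational function $f$ by a flux-cancellation argument. Expanding $\ord_P(f)=-\sum_{D_\gamma\in T_P}D_\gamma f$ and summing over $P$, I would reorganize the finite double sum $\sum_P\sum_{D_\gamma\in T_P}D_\gamma f$ as a sum over the (finitely many) edges on which $f$ is linear. On a bounded edge joining $P$ and $Q$, the outgoing directional derivative of $f$ at $P$ along the edge and the outgoing derivative at $Q$ along the same edge are negatives of each other, so these two contributions cancel; on each unbounded ray, boundedness of $f$ forces $f$ to be eventually constant, hence it contributes slope $0$ at the end. Since condition (ii) in the definition of a rational function guarantees only finitely many breakpoints, all contributions cancel in pairs and $\sum_P\ord_P(f)=0$. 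This is the implication actually needed to define $\deg$ on $\mathrm{Pic}\,\Gamma$.

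For the reverse direction $\deg D=0\Rightarrow D\sim 0$ I would first dispose of the genus-$0$ case: if $\Gamma$ is a tree, then given $D=\sum n_P\,P$ of degree $0$ one builds $f$ edge by edge by integrating the unique integer slope-flow that routes the poles of $D$ to its zeros along the tree, and verifies $(f)=D$ directly. The hard part, and the main obstacle, is the presence of the $g$ independent cycles $\beta_1,\dots,\beta_g$ of $\Gamma$: closing a loop imposes a period (consistency) condition on the slopes one is trying to prescribe, and a general degree-$0$ divisor need not satisfy it. This obstruction is precisely what the tropical Abel--Jacobi map $\vect{A}$ into $\mathrm{Jac}\,\Gamma=\RR^g/B\ZZ^g$ measures, so the honest description of $\ker(\deg)$ is $\mathrm{Pic}^0(\Gamma)\cong\mathrm{Jac}\,\Gamma$. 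I would therefore handle the reverse implication through the tropical Abel--Jacobi / Riemann--Roch theory for metric graphs, establishing surjectivity of $\vect{A}$ and identifying its kernel; this is the genuinely nontrivial input, and care with the cycle-period condition is exactly where I expect the difficulty to concentrate, in sharp contrast to the elementary forward direction above.
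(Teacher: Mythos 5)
The paper offers no proof of this statement: it sits in the review appendix and is quoted as a known fact, so there is no in-paper argument to compare yours against line by line. Judged on its own terms, your forward direction is correct and complete in outline: summing $\ord_P(f)=-\sum_{D_\gamma\in T_P}D_\gamma f$ over the finitely many breakpoints of $f$, pairing the two outgoing slopes at the ends of each segment of linearity, and using boundedness together with integrality of slopes to force slope $0$ on the unbounded rays does give $\sum_P\ord_P(f)=0$. This is exactly the implication the paper actually needs, since the sentence preceding the theorem only uses it to make the degree well defined on $\mathrm{Pic}\,\Gamma$.

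The reverse implication is where your attempt stalls, and you should have drawn the conclusion your own analysis points to: for $g\geq 1$ the implication $\deg D=0\Rightarrow D\sim 0$ is false, so no proof strategy can close this gap. The obstruction you describe --- the period condition around each cycle $\beta_i$ --- is precisely the content of the tropical Abel--Jacobi theorem, which identifies the group of degree-zero divisor classes with $\mathrm{Jac}\,\Gamma=\RR^g/B\ZZ^g$, a $g$-dimensional real torus rather than the trivial group. Appealing to Abel--Jacobi or tropical Riemann--Roch therefore cannot rescue the implication; it proves the different (correct) statement that $D\sim 0$ if and only if $\deg D=0$ \emph{and} $\vect{A}(D)=0$. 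Indeed the rest of the paper depends on $\mathrm{Pic}^0\,\Gamma$ being nontrivial: the formula $c_0=\kappa-\vect{A}(P_1+\cdots+P_{\tilde{g}})$ and the theta-function solution would collapse if every degree-zero divisor were principal. So the theorem as printed is a misstatement (only in the genus-zero case, which you handle correctly by routing an integer slope-flow along the tree, does the full equivalence hold); the honest version is the forward implication alone, which you have proved, and you would improve your write-up by saying this explicitly rather than deferring the converse to machinery that establishes something else.
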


\subsubsection{Period matrix, Jacobi variety}

Let $\gamma:[a,b]\to \Gamma$ be a continuous map. Define 
\[\textstyle
\kakko{\gamma}:=\lim_{N\to \infty}\sum_{n=0}^{N-1}{\mathrm{dist}(\gamma(\frac{nb+(N-n)a}{N}),\gamma(\frac{(n+1)b+(N-n-1)a}{N}))}.
\]

For paths $\gamma_1:[\alpha_1,\beta_1]\to\Gamma$, $\gamma_2:[\alpha_2,\beta_2]\to\Gamma$, we define a real number $(\gamma_1,\gamma_2)$ as follows: (i) Let $U_1:=[\alpha_1,\beta_1]$, $U_2:=[\alpha_2,\beta_2]$. If $\gamma_1,\gamma_2$ are injective and $\gamma_1(U_1)\cap\gamma_2(U_2)$ is connected, put $(\gamma_1,\gamma_2):=(\pm 1)\kakko{\gamma_1(U_1)\cap\gamma_2(U_2)}$, where the signature is $+$ (\textit{resp.}$-$) if the direction of $\gamma_1^{-1}\circ\gamma_2:\RR\to \RR$ is positive (\textit{resp}.~negative). 
(ii) In general case, divide the paths as $\gamma_{1,k}:[t_{k-1},t_k]\to\Gamma$, $\gamma_{2,k}:[s_{k-1},s_k]\to\Gamma$ ($\alpha_1=t_0<t_1<\dots<t_N=\beta_1$, $\alpha_2=s_0<s_1<\dots<s_M=\beta_2$) such that $\gamma_{1,k},\gamma_{2,l}$ are injective and $\gamma_{1,k}\cap\gamma_{2,l}$ are connected, and define $(\gamma_1,\gamma_2):=\sum_{k,l}(\gamma_{1,k},\gamma_{2,l})$.

We call the first Betti number $g$ of $\Gamma$ the genus of $\Gamma$. In the sequel, we fix a $\ZZ$-basis $\beta_1,\dots,\beta_g$ of $H_1(\Gamma,\ZZ)$.

\begin{defi}\label{defA5}
The period matrix of $\Gamma$ is the $g\times g$ real symmetric matrix $B$ defined by $B:=(\beta_i,\beta_j)_{i,j}$. We call the real variety $\mathrm{Jac}\,\Gamma:=\RR^g/B\ZZ^g$ the Jacobi variety of $\Gamma$.
\end{defi}

\begin{lemma}
The matrix $B$ is non-degenerate and positive definite.
\end{lemma}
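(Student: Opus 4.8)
The final statement to prove is that the matrix $B = (\beta_i,\beta_j)_{i,j}$ is non-degenerate and positive definite. The plan is to exploit the explicit form of $B$ already recorded in corollary \ref{crl2.3}, namely
\[
B = \bigl((L-2A_i)\delta_{i,j} + 2\min[S_i,S_j]\bigr)_{i,j},
\]
and to reduce positive definiteness to two structurally transparent pieces. First I would rewrite $B$ as a sum $B = \mathrm{diag}(L-2A_i) + 2G$, where $G := (\min[S_i,S_j])_{i,j}$ is the Gram-type matrix built from the soliton lengths $S_1 \le \cdots \le S_g$. The key observation is that each summand is positive semidefinite, and that the diagonal part is in fact strictly positive, so their sum is positive definite.

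The main work is to show $G$ is positive semidefinite. For this I would use the standard representation $\min[S_i,S_j] = \int_0^\infty \mathbf{1}[t < S_i]\,\mathbf{1}[t < S_j]\,dt$, which exhibits $G$ as a continuous Gram matrix: for any vector $r \in \RR^g$,
\[
\langle r, G r\rangle = \int_0^\infty \Bigl(\sum_{i} r_i\,\mathbf{1}[t < S_i]\Bigr)^2 dt \ge 0.
\]
This makes positive semidefiniteness of $G$ immediate and coordinate-free, avoiding any determinant computation. The diagonal term contributes $\sum_i (L-2A_i)\,r_i^2$; since $A_i = \sum_{k}\min[S_i,S_k] \le \sum_k S_k$ and $L$ is the (large) system-size, each coefficient $L - 2A_i$ is strictly positive for the configurations under consideration, so $\langle r, B r\rangle \ge \sum_i (L-2A_i) r_i^2 > 0$ whenever $r \ne 0$. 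Positive definiteness forces $\det B \ne 0$, giving non-degeneracy as a corollary.

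The step I expect to be the main obstacle is justifying strict positivity of the diagonal entries $L - 2A_i$ rigorously, i.e.\ confirming $A_i < L/2$ for every $i$ from the geometry of $\Gamma^0$ in Figure \ref{fig1} rather than merely asserting it; the inequality is geometrically evident from the picture (the horizontal levels $A_i$ and $L - A_i$ are genuinely separated), but pinning it down may require invoking the admissibility constraints on $(L; S_1,\dots,S_g)$ that guarantee a valid pBBS state. An alternative that sidesteps this is the purely metric-geometric route: argue directly from the definition of the intersection pairing $(\gamma_i,\gamma_j)$ that $\langle r, B r\rangle$ equals the squared length $\kakko{\sum_i r_i \beta_i}$ of a nontrivial cycle, which vanishes only for the zero cycle. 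I would present the Gram-matrix decomposition as the primary argument and keep the cycle-length interpretation in reserve as the conceptual explanation of why positivity must hold.
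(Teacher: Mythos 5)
The paper states this lemma without proof---it is quoted as a standard fact about tropical period matrices (cf.\ \cite{MZ}) in the review appendix---so there is no proof of the paper's own to match yours against; what follows compares your argument to the statement as actually written. Your primary argument is correct for the curves the paper actually uses: the decomposition $B=\mathrm{diag}(L-2A_i)+2G$, the Gram representation $\min[S_i,S_j]=\int_0^\infty\mathbf{1}[t<S_i]\,\mathbf{1}[t<S_j]\,dt$ giving $\langle r,Gr\rangle\ge 0$, and the strict positivity $L-2A_i>0$ (which follows from the pBBS density condition $\sum_k S_k<L/2$, whence $A_i=\sum_k\min[S_i,S_k]\le\sum_k S_k<L/2$, so this is not really an obstacle) together give positive definiteness of the matrix of corollary \ref{crl2.3}.

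The gap is one of scope. The lemma sits in the appendix immediately after definition \ref{defA5} and asserts positive definiteness of the period matrix of an \emph{arbitrary} tropical curve, independently of corollary \ref{crl2.3}; it is needed there so that $\Theta(z;B)$ is well defined in general. Your main argument therefore proves only a special case, and the route you relegate to ``reserve'' is in fact the proof of the statement as written. It does go through, with one correction: writing each $\beta_i$ as a $1$-chain $\sum_e m_{i,e}\,e$ over the edges of $\Gamma$, the definition of the pairing gives $(\beta_i,\beta_j)=\sum_e m_{i,e}\,m_{j,e}\,\ell(e)$ with $\ell(e)>0$ the lattice length of $e$, hence $\langle r,Br\rangle=\sum_e\bigl(\sum_i r_i m_{i,e}\bigr)^2\ell(e)\ge 0$, with equality iff the real $1$-chain $\sum_i r_i\beta_i$ vanishes; since $\Gamma$ is a graph, cycles are honest $1$-chains and the basis $\beta_1,\dots,\beta_g$ of $H_1(\Gamma,\ZZ)$ is linearly independent among them, forcing $r=0$. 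Note that $\langle r,Br\rangle$ is this edge-weighted sum of squared multiplicities, not literally the length $\kakko{\sum_i r_i\beta_i}$ of the supporting set, though the positivity conclusion is the same. I would promote that argument to primary and keep your Gram decomposition as a concrete verification for the curves of Figure \ref{fig1}.
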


Choose and fix a point $O\in \Gamma$. Let $\gamma_P$ be a path on $\Gamma$ which starts from $O$ and ends at $P$.
\begin{defi}
The Abel-Jacobi mapping starting at $O$ is the mapping $\vect{A}:\Gamma\to \mathrm{Jac}\,\Gamma;\ P\mapsto ((\beta_1,\gamma_P),(\beta_2,\gamma_P),\dots,(\beta_g,\gamma_P))\ \ (\mathrm{mod}\,B\ZZ^g)$. The value of $\vect{A}$ does not depend on the choice of $\gamma_P$. We can extend the Abel-Jacobi mapping over $\mathrm{Div}\,\Gamma$ linearly:
\[
\vect{A}:\mathrm{Div}\,\Gamma\to \mathrm{Jac}\,\Gamma;\quad \textstyle\vect{A}(\sum{n_P P})=\sum n_P\,\vect{A}(P).
\]
\end{defi}

\begin{thm}[\cite{MZ}]
The mapping $\vect{A}$ induces the homomorphism of abelian groups $\mathrm{Pic}\,\Gamma \to \mathrm{Jac}\,\Gamma$.
\end{thm}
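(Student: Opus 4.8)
The statement asserts that the $\ZZ$-linear map $\vect{A}:\mathrm{Div}\,\Gamma\to\mathrm{Jac}\,\Gamma$ factors through the quotient $\mathrm{Pic}\,\Gamma$. Since $\vect{A}$ is already a homomorphism on $\mathrm{Div}\,\Gamma$ by construction, and the equivalence relation defining $\mathrm{Pic}\,\Gamma$ is generated by the principal divisors $(f)$, the universal property of the quotient reduces everything to the tropical analogue of Abel's theorem, namely $\vect{A}((f))=0$ in $\mathrm{Jac}\,\Gamma$ for every rational function $f$ on $\Gamma$. The plan is to build, directly out of $f$, a $1$-chain with boundary $(f)$ all of whose $\beta_i$-periods vanish.

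First I would introduce the gradient chain $c_f:=\sum_e s_e\cdot e$, the sum running over the edges of $\Gamma$, each oriented so that $f$ is nondecreasing along it, with $s_e\in\ZZ$ the integer slope of $f$ on $e$. Computing the boundary point by point and comparing the net incoming slope of $c_f$ at $P$ with the defining formula $\ord_P(f)=-\sum_{D_\gamma\in T_P}D_\gamma(f)$ for (minus) the sum of outgoing slopes, one checks that $\partial c_f=\sum_P\ord_P(f)\cdot P=(f)$.

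Because $(f)$ is principal it has degree $0$, so the $1$-chain $\gamma':=\sum_P\ord_P(f)\,\gamma_P$ assembled from the paths $\gamma_P$ defining $\vect{A}(P)$ also satisfies $\partial\gamma'=(f)$; hence $\gamma'-c_f$ is an integer $1$-cycle, an element of $H_1(\Gamma,\ZZ)=\ZZ\beta_1\oplus\cdots\oplus\ZZ\beta_g$. As $((\beta_i,\sum_j m_j\beta_j))_i=(\sum_j m_jB_{ij})_i\in B\ZZ^g$, replacing $\gamma'$ by $c_f$ does not change the class, so $\vect{A}((f))\equiv((\beta_i,c_f))_i\pmod{B\ZZ^g}$. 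It then remains to evaluate this period vector. Expanding bilinearly gives $(\beta_i,c_f)=\sum_e s_e(\beta_i,e)$, and by definition $(\beta_i,e)=\epsilon_{i,e}\kakko{e}$, where $\epsilon_{i,e}\in\{-1,0,1\}$ records the signed incidence of the loop $\beta_i$ with $e$. Since $\epsilon_{i,e}s_e\kakko{e}$ is precisely the signed increment of $f$ as $\beta_i$ traverses $e$, the total $(\beta_i,c_f)=\sum_e\epsilon_{i,e}s_e\kakko{e}$ is the change of the single-valued function $f$ around the closed loop $\beta_i$, which is $0$. Hence $\vect{A}((f))=0$ and the induced homomorphism on $\mathrm{Pic}\,\Gamma$ exists.

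The main obstacle I anticipate is the boundary computation $\partial c_f=(f)$ at the exceptional points of $\Gamma$ — the genuine vertices where several edges meet, the corners of $f$, and the endpoints of the infinite edges — where one must verify that the orientation-and-slope accounting reproduces $\ord_P$ exactly, and check that the construction still makes sense on the unbounded edges. Once $c_f$ is set up correctly, the closed-loop vanishing of $(\beta_i,c_f)$ is automatic from single-valuedness of $f$.
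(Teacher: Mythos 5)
The paper does not prove this statement at all: it is quoted from the reference [MZ] (Mikhalkin--Zharkov) with no argument supplied, so there is no in-paper proof to compare against. Your proposal supplies the standard proof of the tropical Abel theorem, and it is correct. The reduction is right: since $\vect{A}$ is already defined $\ZZ$-linearly on $\mathrm{Div}\,\Gamma$, everything comes down to $\vect{A}((f))\equiv 0 \pmod{B\ZZ^g}$, and your gradient chain $c_f=\sum_e s_e\cdot e$ is exactly the right object. The three ingredients all check out: (a) $\partial c_f=(f)$ follows from comparing the head-minus-tail bookkeeping with $\ord_P(f)=-\sum_{D_\gamma\in T_P}D_\gamma(f)$, provided you first subdivide $\Gamma$ at the corners of $f$ so that $f$ has a single integer slope on each edge; (b) $\gamma'-c_f$ is a $1$-cycle, and in a graph $Z_1=H_1$, so it is literally an integer combination $\sum_j m_j\beta_j$, whence $((\beta_i,\gamma'-c_f))_i=Bm\in B\ZZ^g$; (c) $(\beta_i,c_f)=\sum_e\epsilon_{i,e}s_e\kakko{e}$ telescopes to the total variation of the single-valued $f$ around the closed loop $\beta_i$, which vanishes. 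Two small points you flagged that do need a sentence each in a complete write-up: the degree-zero fact $\deg((f))=0$ (needed so that $\partial\gamma'=(f)$ rather than $(f)-\deg((f))\cdot O$) is itself obtained by the same telescoping of slopes over edges; and on the unbounded rays the boundedness condition in the definition of a rational function forces $f$ to be eventually constant, so $c_f$ is a finite chain and the boundary computation at infinity is vacuous. With those remarks added, the argument is complete and is, as far as I can tell, the same proof as in [MZ].
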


\subsubsection{Tropical theta function and Riemann constant}\label{sec:a.1.5}

We introduce a tropical analog of the theta functions over Riemann surfaces.

\begin{defi}
The following real function $\Theta$ over $\RR^g$ is called the tropical theta function associated with $\Gamma$:
\[
\Theta(z;B):=\min_{m\in \ZZ^g}{[\frac{1}{2}\langle m,Bm \rangle+\langle m,z\rangle]},\quad
z\in \RR^g,
\]
where $\langle v,w\rangle:=\sum_{i=1}^g{v_iw_i}$ $(v,w\in\RR^g)$, and $B$ is the period matrix of $\Gamma$.
\end{defi}

Because $B$ is positive definite, the tropical theta function is well-defined over $\RR^g$. In fact, $\Theta$ is a piecewise linear convex function over $\RR^g$.
\begin{lemma}\label{hodai}
(i) Let $r\in\ZZ^g$. Then,
\begin{equation}\label{periodic}
\Theta(z+Br;B)=(-\frac{1}{2}\langle r,Br \rangle-\langle z,r \rangle)+\Theta(z;B),
\end{equation}

(ii) 
\[
\Theta(-z;B)=\Theta(z;B).
\]
\end{lemma}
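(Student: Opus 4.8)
The plan is to prove both identities directly from the definition $\Theta(z;B)=\min_{m\in\ZZ^g}[\frac{1}{2}\langle m,Bm\rangle+\langle m,z\rangle]$ by reindexing the lattice $\ZZ^g$, using that $B$ is symmetric (Definition \ref{defA5}) and that an affine shift or the reflection of $\ZZ^g$ by an integer vector is a bijection of $\ZZ^g$.

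For part (i), I would begin with
\[
\Theta(z+Br;B)=\min_{m\in\ZZ^g}\Big[\tfrac{1}{2}\langle m,Bm\rangle+\langle m,z\rangle+\langle m,Br\rangle\Big]
\]
and perform the substitution $m=m'-r$, which is a bijection of $\ZZ^g$ precisely because $r\in\ZZ^g$. Expanding $\frac{1}{2}\langle m'-r,B(m'-r)\rangle$ and $\langle m'-r,z+Br\rangle$, and invoking the symmetry of $B$ so that $\langle r,Bm'\rangle=\langle m',Br\rangle$, the terms linear in $m'$ of the form $\pm\langle m',Br\rangle$ cancel. What remains inside the bracket is $\frac{1}{2}\langle m',Bm'\rangle+\langle m',z\rangle$ together with the $m'$-independent constant $-\frac{1}{2}\langle r,Br\rangle-\langle z,r\rangle$; pulling the constant out of the minimum and recognizing the remaining expression as $\Theta(z;B)$ yields \eqref{periodic}.

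For part (ii), I would substitute $m\mapsto -m$ in $\Theta(-z;B)=\min_{m\in\ZZ^g}[\frac{1}{2}\langle m,Bm\rangle-\langle m,z\rangle]$. Since $m\mapsto-m$ is a bijection of $\ZZ^g$, the quadratic term $\frac{1}{2}\langle m,Bm\rangle$ is unchanged while the two sign flips in the linear term cancel, giving exactly $\Theta(z;B)$.

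There is no substantive obstacle here; the computation is pure bookkeeping. The only points requiring care are the correct use of the symmetry of $B$ when expanding the cross term in part (i) — this is what makes the $\langle m',Br\rangle$ contributions cancel rather than accumulate — and the observation that the two index substitutions are bijections of $\ZZ^g$, which is where the hypothesis $r\in\ZZ^g$ is genuinely used.
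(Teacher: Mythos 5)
Your proof is correct and is exactly the computation the paper has in mind — the paper's own proof simply says ``It is straightforward by definition,'' and your reindexing $m\mapsto m'-r$ (using the symmetry of $B$) for (i) and $m\mapsto -m$ for (ii) is the standard way to fill in that routine verification. Nothing further is needed.
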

\begin{proof}
It is straightforward by definition.
\end{proof}

Now we note the behavior of $\Theta$ around $\frac{1}{2}Be_i\in\RR^g$.
\begin{lemma}\label{half}
Let $\gamma$ be the map $\gamma:\RR\to\RR^g;$ $t\mapsto \frac{1}{2}Be_i+te_i$. Then the function $f(t)=\Theta(\gamma(t);B)$ satisfies $\ord_0(f)=1$.
\end{lemma}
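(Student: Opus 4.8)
The final statement to prove is Lemma \ref{half}: the function $f(t) = \Theta(\tfrac12 Be_i + te_i; B)$ satisfies $\ord_0(f) = 1$.

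\medskip

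The plan is to compute $f(t)$ explicitly near $t=0$ and exhibit exactly a unit slope discontinuity there. By definition of the tropical theta function, $f(t) = \min_{m \in \ZZ^g}[\tfrac12\langle m, Bm\rangle + \langle m, \tfrac12 Be_i + te_i\rangle]$. First I would use the quasi-periodicity relation (\ref{periodic}) from Lemma \ref{hodai}(i) to pin down the symmetry of the argument $z = \tfrac12 Be_i$. The key algebraic observation is that $\tfrac12 Be_i$ is a half-period, so applying (\ref{periodic}) with $r = -e_i$ should relate the minimizing index $m$ to $m - e_i$ and reveal a symmetry of $f$ about $t=0$. Concretely, I expect to show that the two lattice vectors $m = 0$ and $m = -e_i$ are the competing minimizers as $t$ crosses $0$: for $t$ slightly negative one of them wins, for $t$ slightly positive the other wins, and they are exactly balanced at $t = 0$.

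\medskip

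The main computation is to verify that these two indices give the two linear pieces of $f$ near the origin and that no other lattice point can tie or beat them in a neighborhood of $t=0$. For $m=0$ the contribution is $0$, and for $m = -e_i$ it is $\tfrac12\langle e_i, Be_i\rangle - \langle e_i, \tfrac12 Be_i + te_i\rangle = \tfrac12 B_{ii} - \tfrac12 B_{ii} - t = -t$. Thus the two relevant branches are $0$ and $-t$, whose minimum is $\min[0,-t]$, which has $\ord_0 = 1$ (it equals $0$ for $t \le 0$ and $-t$ for $t \ge 0$, a single downward kink with left derivative $0$ and right derivative $-1$, so $\ord_0(f) = -(0 + (-1)) = 1$). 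The plan is to confirm that for all other $m$ the value $\tfrac12\langle m, Bm\rangle + \langle m, \tfrac12 Be_i\rangle + t\langle m, e_i\rangle$ stays strictly above $\min[0,-t]$ when $|t|$ is small. Since $B$ is positive definite (Lemma following Definition \ref{defA5}), the strictly convex quadratic $\tfrac12\langle m, Bm\rangle + \tfrac12\langle m, Be_i\rangle$ attains its minimum over $\ZZ^g$ only at the two lattice points nearest the real minimizer $-\tfrac12 e_i$, namely $m=0$ and $m=-e_i$, which are equidistant; positive definiteness gives a strict gap to all remaining lattice points, and this gap persists for $|t|$ small by continuity.

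\medskip

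The step I expect to be the main obstacle is the last one: rigorously showing that $m = 0$ and $m = -e_i$ are the \emph{unique} pair of minimizers at $t=0$ and that the gap to every other $m \in \ZZ^g$ is uniform, so that the two-branch description of $f$ is valid on a genuine neighborhood of $0$. This is where positive definiteness of $B$ must be used quantitatively rather than formally. One clean way to organize it is to reduce to the $t=0$ case by the symmetry $f(-t) = f(t)$ coming from combining Lemma \ref{hodai}(ii) with the half-period shift, so that it suffices to analyze $t \ge 0$ and show the minimum switches from the $m=0$ branch to the $m=-e_i$ branch exactly at $t=0$. Everything else is the routine verification that the lattice-point quadratic minimization behaves as expected, which I would not grind through in detail.
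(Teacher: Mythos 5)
Your strategy---identifying the two competing linear branches of $f$ near $t=0$ directly from the lattice minimization---is genuinely different from the paper's, which never analyzes the lattice sum at all: it applies the quasi-periodicity \eqref{periodic} with $r=e_i$ to the half-period shift, then the evenness $\Theta(-z;B)=\Theta(z;B)$, to obtain the functional equation $f(t)=-t+f(-t)$, and reads off the kink at $t=0$ from piecewise linearity. Your route founders exactly at the step you flagged as the main obstacle. The claim that positive definiteness of $B$ forces $m=0$ and $m=-e_i$ to be the \emph{unique} minimizers of $\tfrac12\langle m,Bm\rangle+\tfrac12\langle m,Be_i\rangle$ over $\ZZ^g$ is false: this quantity equals $\tfrac12\langle v,Bv\rangle-\tfrac18\langle e_i,Be_i\rangle$ with $v=m+\tfrac12 e_i$, so you are minimizing the $B$-norm over the coset $\ZZ^g+\tfrac12 e_i$, and the coset points nearest the origin in the $B$-metric need not be $\pm\tfrac12 e_i$. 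For instance, with $g=2$, $i=1$ and the positive definite matrix $B=\begin{pmatrix}10&-2\\-2&1\end{pmatrix}$, the point $m=e_2$ gives the value $-\tfrac12$, strictly below the value $0$ attained at $m=0$ and $m=-e_1$. The minimizers do pair up under $v\mapsto-v$, i.e.\ $m\leftrightarrow -m-e_i$, so the two competing slopes always sum to $-1$; but to conclude $\ord_0(f)=1$ rather than $3,5,\dots$ you must still show the minimizing pair has $i$-th coordinates $\{0,-1\}$, and positive definiteness alone does not give this. (For the matrices $B=LE+S$ of Corollary \ref{crl2.3} with $L$ large, diagonal dominance rescues your computation, but the lemma is stated for a general period matrix and your argument invokes only positive definiteness.)

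A secondary but consequential error: the symmetry you propose to exploit, $f(-t)=f(t)$, is not what the half-period shift plus Lemma \ref{hodai}(ii) yields; the correct relation is $f(t)-f(-t)=-t$. That extra $-t$ is the whole point---it is what produces the unit jump in slope---and assuming $f$ even would instead force the two outgoing slopes to be equal. I would replace the lattice analysis by the paper's two-line computation: $f(t)=\Theta\bigl((-\tfrac12 Be_i+te_i)+Be_i;B\bigr)=-t+\Theta(-\tfrac12 Be_i+te_i;B)=-t+f(-t)$, and then extract the order at $0$ from this identity together with the piecewise linearity (and local structure) of $\Theta$.
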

\begin{proof}
By lemma \ref{hodai} (i), we have $f(t)=\Theta(\frac{1}{2}Be_i+te_i;B)=\Theta(-\frac{1}{2}Be_i+te_i+Be_i;B)=-t+\Theta(-\frac{1}{2}Be_i+te_i;B)$. Therefore, from lemma \ref{hodai} (ii), it follows that $\Theta(\frac{1}{2}Be_i+te_i;B)-\Theta(\frac{1}{2}Be_i-te_i;B)=-t$. Then, $f(t)-f(-t)=-t$. Because $f$ is piecewise linear, we obtain $\ord_0(f)=1$ by taking the limit $t\to 0^+$.
\end{proof}

We construct a new function over $\Gamma$ by using the Abel-Jacobi mapping and the tropical theta function. Define the multi-valued function $\tilde{\vect{A}}:\Gamma\to\RR^g$ by $\tilde{\vect{A}}:P\mapsto ((\beta_1,\gamma_P),\dots,(\beta_g,\gamma_P))$, where $\gamma_P$ is a path over $\Gamma$ from $O$ to $P$. By definition, the $\tilde{\vect{A}}$ is a lift of $\vect{A}$. Consider the multi-valued function $f:P\mapsto \Theta(\tilde{\vect{A}}(P);B)$ over $\Gamma$.
\begin{lemma}\label{zero}
The degree $\ord_P(f)$ does not depend on the choice of the branch of $f:\Gamma\to\RR$.
\end{lemma}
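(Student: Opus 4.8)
The plan is to reduce the statement to a single transformation rule for the tropical theta function under a change of branch, and then to invoke the harmonicity of the tropical Abel--Jacobi map. Since the only source of multivaluedness in $f(P)=\Theta(\tilde{\vect{A}}(P);B)$ is that of $\tilde{\vect{A}}$, I would fix an arbitrary point $P\in\Gamma$ and work on a simply connected neighborhood of $P$, on which every branch of $\tilde{\vect{A}}$, and hence of $f$, is single valued. Any two branches of $\tilde{\vect{A}}$ on such a neighborhood differ by a fixed period $Br$ with $r\in\ZZ^g$; writing $z=\tilde{\vect{A}}(\cdot)$ for one of them, the two corresponding branches of $f$ are $\Theta(z;B)$ and $\Theta(z+Br;B)$. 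It then suffices to show that their difference has order $0$ at $P$, and since $P$ is arbitrary this yields the branch-independence of $\ord_P(f)$.

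First I would apply the quasi-periodicity of lemma \ref{hodai} (i) to write
\[
\Theta(z+Br;B)-\Theta(z;B)=-\tfrac{1}{2}\langle r,Br\rangle-\langle \tilde{\vect{A}}(\cdot),r\rangle,
\]
so that the difference of the two branches is an affine function of $\tilde{\vect{A}}$. Using the additivity $\ord_P(g_1+g_2)=\ord_P(g_1)+\ord_P(g_2)$ together with the fact that the constant $-\tfrac{1}{2}\langle r,Br\rangle$ contributes nothing to any order, the whole problem collapses to the single claim
\[
\ord_P\big(\langle \tilde{\vect{A}}(\cdot),r\rangle\big)=0\qquad\text{for all }P\in\Gamma,\ r\in\ZZ^g.
\]

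The hard part will be exactly this claim, which amounts to the harmonicity (balancing) of the Abel--Jacobi map. Along each edge of $\Gamma$ the coordinate $P\mapsto(\beta_i,\gamma_P)$ of $\tilde{\vect{A}}$ is linear with an integer slope determined by whether the edge lies along, against, or off the cycle $\beta_i$, so $\langle \tilde{\vect{A}}(\cdot),r\rangle$ is piecewise linear and $\ord_P$ is defined. By definition $\ord_P$ equals minus the sum of the outgoing slopes over the directions in $T_P$, which here is $-\sum_i r_i\big(\sum_e c_i(e)\big)$, where $c_i(e)$ is the slope of the $i$-th coordinate along the outgoing edge $e$. The decisive point is that $\sum_e c_i(e)=0$ at every vertex: because each $\beta_i$ is a cycle, it enters and leaves $P$ in balance (Kirchhoff's law), so the net outgoing slope of each coordinate vanishes. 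I expect this balancing to be the only genuine obstacle; it is the tropical reflection of the periods coming from honest $1$-cycles, and I would justify it directly from the intersection-pairing definition of $(\beta_i,\gamma_P)$.

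Granting the balancing, every outgoing-slope sum is zero, so $\ord_P(\langle \tilde{\vect{A}}(\cdot),r\rangle)=0$ for all $P$, and hence the displayed difference of branches has order $0$ everywhere. This gives $\ord_P(\Theta(z+Br;B))=\ord_P(\Theta(z;B))$ for every $P$, which is precisely the asserted independence of $\ord_P(f)$ from the choice of branch.
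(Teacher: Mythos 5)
Your proposal is correct and follows essentially the same route as the paper: reduce via the quasi-periodicity of lemma \ref{hodai} (i) to showing $\ord_P(\langle\tilde{\vect{A}}(\cdot),r\rangle)=0$, and then establish this by the balancing of each closed cycle $\beta_i$ at $P$ (the paper counts the $n$ incoming directions with slope $-1$ against the $n$ outgoing directions with slope $+1$, which is exactly your Kirchhoff-law argument). No gaps.
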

\begin{proof}
For a fixed point $P\in\Gamma$, let $z,z'\in\RR^g$ be two values of the multi-valued function $\tilde{\vect{A}}$ at $P$. Then, there exists some $r\in\ZZ^g$ such that $z'=z+Br$. From (\ref{periodic}), it is sufficient to prove 
\[
\ord_P(-\frac{1}{2}\langle r,Br \rangle\mathrm-\langle z,r \rangle)=0,\quad z=\tilde{\vect{A}}(P)
\]
for any $r$. Let $e_i\in\RR^g$ be the $i$-th fundamental vector. Due to the equation $\langle z,e_i\rangle=(\beta_i,\gamma_P)$ ($\gamma_P$ is a path from $O$ to $P$), the problem boils down to prove $$\ord_P((\beta_i,\gamma_P))=0.$$

Let $F$ be the multi-valued function $Q\mapsto (\beta_i,\gamma_Q)$. Take a small neighborhood $V$ of $P$. By retaking smaller $V$ if needed, we can assume  $V=\bigcup_{\alpha}{\gamma_\alpha}$ and $T_P=\{D_{\gamma_\alpha}\}$. Because $\beta_i$ is a closed path, there exist $2n$ indexes $\alpha'_1,\dots,\alpha'_n,\alpha''_1,\dots,\alpha''_n$ such that (i) $\beta_i$ comes to $P$ along to $\gamma_{\alpha'_k}$ (ii) $\beta_i$ goes from $P$ along to $\gamma_{\alpha''_k}$. By definition of $F$, we have
\begin{align*}
D_{\gamma_{\alpha'_i}}F=-1,\quad D_{\gamma_{\alpha''_i}}F=1.
\end{align*}
Therefore, $\ord_P(F)=\sum_{k=1}^n{(-1)}+\sum_{k=1}^n{(+1)}=0$.
\end{proof}

By this lemma, we can define the degree of multi-valued function $f$. We call a point $P$ which satisfies $\ord_P(f)>0$ a zero of $f$.

\begin{lemma}[\cite{MZ}]\label{ZERO}
The number of zeros of $f$ is $g=\mathrm{genus}\,\Gamma$.
\end{lemma}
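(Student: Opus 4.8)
The plan is to compute the quantity $\deg^+(f):=\sum_{P:\,\ord_P(f)>0}\ord_P(f)$ directly, where by lemma \ref{zero} the divisor $(f)=\sum_P\ord_P(f)\cdot P$ is already known to be well defined independently of the branch. I would proceed in two stages: first show that $f$ has no poles, so that $\deg^+(f)$ coincides with the total degree $\sum_P\ord_P(f)$ of the (branch-independent) divisor of $f$; and then evaluate this total degree as an intersection number with the tropical theta divisor, which I expect to equal $g$. The local model throughout is lemma \ref{half}, which already exhibits one simple zero of $f$ in each coordinate direction.

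First I would prove $\ord_P(f)\ge 0$ for every $P\in\Gamma$. By definition $\Theta(\,\cdot\,;B)$ is the minimum of finitely many affine functions on $\RR^g$, so it bends only ``downward''. On the interior of an edge, the lift $\tilde{\vect{A}}$ is affine with an integral direction vector, whence $f$ is the restriction of $\Theta$ to a line and can only bend downward, giving $\ord_P(f)\ge 0$ there. At a vertex $P$ of $\Gamma$ I would combine this same property of $\Theta$ with the balancing of the primitive edge-directions of $\tilde{\vect{A}}$ leaving $P$: writing $\ord_P(f)=-\sum_{D_\gamma\in T_P}D_\gamma f$ and using that the outgoing directions of $\tilde{\vect{A}}$ sum to zero, the total bending of $f$ at $P$ is again nonpositive, so $\ord_P(f)\ge 0$. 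Hence $f$ has only zeros and $\deg^+(f)=\sum_P\ord_P(f)$.

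Next I would count this total degree by a translation argument. The locus of non-differentiability of $\Theta(\,\cdot\,;B)$ is invariant under the quasi-periodicity \eqref{periodic}, so it descends to a polyhedral hypersurface $\bar\Xi\subset\mathrm{Jac}\,\Gamma=\RR^g/B\ZZ^g$, the tropical theta divisor; the zeros of $f$ are exactly the preimages $\vect{A}(\Gamma)\cap\bar\Xi$, counted with the local multiplicity $\ord_P(f)$. For a generic translate, replacing $f$ by $f_\zeta(P):=\Theta(\tilde{\vect{A}}(P)+\zeta;B)$, the preceding paragraph still gives no poles, and the total degree is locally constant in $\zeta$, hence independent of $\zeta$. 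Choosing $\zeta$ generic so that $\vect{A}(\Gamma)$ meets $\bar\Xi$ transversally, each intersection is simple and modelled on lemma \ref{half}, and the balancing of $\vect{A}(\Gamma)$ against the principal polarization forces exactly one intersection per generator $\beta_1,\dots,\beta_g$. Summing gives $\deg^+(f)=g$.

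The hard part will be the vertex analysis and the matching of local multiplicities in the two stages. At points where $\tilde{\vect{A}}$ is non-smooth and several edges of $\Gamma$ meet, one must verify carefully that the balancing condition together with the min-of-affine structure of $\Theta$ genuinely forbids poles, and that the contribution of each transverse intersection with $\bar\Xi$ is exactly $1$ rather than some larger integer; pinning the global total to precisely $g$, and not merely to a finite number, is where the real content of the tropical Riemann theorem lies. The explicit computation in lemma \ref{half} is the local input that controls this count.
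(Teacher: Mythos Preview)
The paper does not prove this lemma: it is quoted from \cite{MZ} (Mikhalkin--Zharkov) and used as a black box, so there is no in-paper argument to compare against.

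Your sketch is along the lines of the actual proof in \cite{MZ}. The ``no poles'' step is sound and can be made precise exactly as you indicate: at a vertex $P$, picking any single minimiser $m^\ast$ for $\Theta$ at $\tilde{\vect{A}}(P)$ gives
\[
-\ord_P(f)=\sum_{D_\gamma\in T_P}D_\gamma f=\sum_{\gamma}\min_m\langle m,v_\gamma\rangle\le\sum_{\gamma}\langle m^\ast,v_\gamma\rangle=\Bigl\langle m^\ast,\sum_\gamma v_\gamma\Bigr\rangle=0,
\]
the last equality being the harmonicity of $\tilde{\vect{A}}$. The deformation argument for independence of $\zeta$ is also fine once you note that $\vect{A}$ is constant along the infinite edges $\sigma_i$, so all zeros live in the compact skeleton and none can escape.

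What remains genuinely open in your write-up is the step you yourself flag: that the intersection number of $\vect{A}(\Gamma)$ with the tropical theta divisor is exactly $g$. The phrase ``one intersection per generator $\beta_1,\dots,\beta_g$'' is a heuristic, not an argument; a priori a single cycle could contribute $0$ or $2$ intersections and still be balanced. In \cite{MZ} this is settled either by computing the homology class of the theta divisor in $\mathrm{Jac}\,\Gamma$ and pairing it with the class $[\vect{A}(\Gamma)]=\sum_i\beta_i$, or equivalently by using the quasi-periodicity \eqref{periodic}: traversing the cycle $\beta_i$ shifts the branch of $f$ by an affine function whose slope contributes exactly $1$ to the total order of zeros, summing to $g$. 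Either computation would close the gap; as written, your proposal shows the zero divisor is effective and deformation-invariant but does not yet pin its degree to $g$.
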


\begin{defi}
Let $Q_1,Q_2,\dots,Q_g$ be the zeros of $f$ with multiplicity. The Riemann constant $\kappa$ is the element of $\mathrm{Jac}\,\Gamma$ defined by $\kappa:=\vect{A}(Q_1+\dots+Q_g)$.
\end{defi}

\end{document}